\documentclass{imanum}
\usepackage{graphicx}

\usepackage{lipsum}
\usepackage{algorithmic}
\usepackage{epstopdf}
\usepackage{geometry}
\usepackage{epsfig}
\usepackage{amsfonts}
\usepackage{algorithm}
\usepackage[figuresright]{rotating}
\usepackage{amscd}
\usepackage{mathrsfs}
\usepackage{booktabs,longtable}
\usepackage{indentfirst}
\usepackage{subfigure}
\usepackage{amstext}
\usepackage{amssymb}
\usepackage{fancyhdr}
\usepackage{amsmath}
\usepackage{cleveref}

\begin{document}

\title{Splitting-based randomized iterative methods for solving indefinite least squares problem}
\shorttitle{Splitting-based randomized iterative methods}

\author{%
{\sc
Yanjun Zhang 
and Hanyu Li\thanks{Corresponding author. Email: lihy.hy@gmail.com or hyli@cqu.edu.cn}
 } \\[2pt]
College of Mathematics and Statistics, Chongqing University, Chongqing 401331, P.R. China 
}
\shortauthorlist{Y. J. Zhang \emph{et al.}}

\maketitle

\begin{abstract}
{The indefinite least squares (ILS) problem is a generalization of the famous linear least squares problem. It minimizes an indefinite quadratic form with respect to a signature matrix. For this problem, we first propose an impressively simple and effective splitting (SP) method according to its own structure 
and prove that it converges `unconditionally' for any initial value. Further, to avoid  implementing some matrix multiplications and calculating the inverse of large matrix and considering the acceleration and efficiency of the randomized strategy, we develop two randomized iterative methods on the basis of the SP method as well as the randomized Kaczmarz, Gauss-Seidel and coordinate descent methods, and describe their convergence properties. 
Numerical results 
show that our 
three methods all have quite decent performance in both computing time and iteration numbers compared with the latest 
iterative method of the ILS problem, and also demonstrate that the two randomized methods indeed yield significant acceleration in term of computing time.}
{indefinite least squares problem; splitting method; randomized method; Kaczmarz; Gauss-Seidel; coordinate descent.}
\end{abstract}

\section{Introduction}
\label{sec;introduction}
 The indefinite least squares (ILS) problem was first proposed in 
\citet{chandrasekaran1998stable}, whose specific form is as follows:
\begin{align}
\text{ILS:} \quad \min \limits_{x\in\mathbb{R}^n}(b-Ax)^TJ(b-Ax),  \label{Sec11}
\end{align}
where $A\in\mathbb{R}^{m\times n}$ with $m\geq n$, $b\in\mathbb{R}^m$, and $J$ is the signature matrix defined as
\begin{align}
J=\left[\begin{array}{cc}
I_p& 0 \\
0 & -I_q
\end{array}\right], \notag  \quad p+q=m.
\end{align}
Here and in the sequel, $G^T$ denotes the transpose of $G$ and $I_t$ is the identity matrix of dimension $t$.
Obviously, the ILS problem will reduce to the standard linear least squares problem when $q=0$. However, for $pq>0$, the problem (\ref{Sec11}) is to minimize an indefinite quadratic form  with respect to the signature matrix $J$ and its 
normal equation is:
\begin{align}
  A^TJAx=A^TJb.  \label{Sec12}
\end{align}
Considering that the Hessian matrix of the problem (\ref{Sec11}) is $2A^TJA$, the ILS problem has a unique solution if and only if
 \begin{align}
 A^TJA  \quad \text{is symmetric and positive definite (SPD).} \label{Sec13}
\end{align}
Throughout this paper, we assume that the above condition is always true.

The ILS problem has found many applications in some fields, such as the total least squares problems \citep[see, e.g.,][]{golub1980analysis, van1991total} and $H^{\infty}$ smoothing \citep[][]{hassibi1993recursive}. Extensive works on computations, perturbation analysis, and applications of this problem have been published \citep[see, e.g.,][]{chandrasekaran1998stable,bojanczyk2003solving,xu2004backward,liu2011preconditioned, liu2013incomplete,liu2014block,li2014mixed,li2018partial,diao2019backward,song2020ussor,bojanczyk2021algorithms}. In this paper, we mainly focus on its numerical methods. For the small and dense ILS problem (\ref{Sec11}), 
\citet{chandrasekaran1998stable} designed a stable direct method called the QR-Cholesky method, which 
first performs the QR factorization of $A$, i.e., $A=QR$ with $Q^TQ=I$ and $R$ being a upper triangular matrix, and then solves $ (Q^TJQ)y=Q^TJb$ by using the Cholesky factorization. Finally, the solution is returned by $x=R^{-1}y$. 
Later, 
\citet{bojanczyk2003solving} devised a method with a lower operation count than the QR-Cholesky method 
by using the hyperbolic QR factorization. After that, \citet{xu2004backward} proposed to apply the hyperbolic QR factorization to the normalized matrix $A$ to make the algorithm be backward stable.
More recently, a unified analysis of the above three methods was made in \citet{bojanczyk2021algorithms}. 
For the large and sparse ILS problem, the direct methods are no longer feasible and hence it is necessary to introduce the iterative methods. Specifically, the preconditioned conjugate gradient methods were first considered in \citet{liu2011preconditioned} and \citet{liu2013incomplete}. Then, 
\citet{liu2014block} investigated the block SOR method with a relaxation parameter, which was further improved recently by
\citet{song2020ussor} who presented the USSOR method including two parameters. 

\subsection{Motivation and contributions}
In \citet{song2020ussor}, the author first transformed the normal equation (\ref{Sec12}) into a larger linear system and then introduced the USSOR  method with two parameters into the new system for solving the problem (\ref{Sec11}). The method is convergent 
only under certain conditions, 
and 
their numerical results show that different parameters will lead to different results and it is 
difficult to determine the 
optimal parameters 
for large-scale problems. Instead, 
we propose a splitting (SP) method without parameter for solving the ILS problem (\ref{Sec11}) 
by fully exploiting the structure of the problem itself and show that the new method 
converges `unconditionally' in theory. For numerical results, it is also uniformly superior to the USSOR method in \citet{song2020ussor}.

Considering that 
the SP method 
needs to compute matrix products and inverse and the cost is prohibitive for large-scale matrices, 
we transform our splitting iterative scheme
into two individual consistent linear subsystems. Then, 
the randomized Kaczmarz (RK) method \citep[][]{strohmer2009randomized} and the randomized Gauss-Seidel (RGS) method \citep[][]{leventhal2010randomized} are applied to solve each subsystem and hence we propose the splitting-based RK-RGS (SP-RK-RGS) method for solving the ILS problem (\ref{Sec11}). 
It is interesting that the proposed joint randomized iterative method only accesses two columns of matrix in each iteration. 
Furthermore, the 
method also allows for opportunities to execute in parallel. 

Another interesting finding is that when the indices of the two random columns in each iteration of the SP-RK-RGS method are the same, the joint randomized iterative update will reduce to the randomized coordinate descent (RCD) update \citep[][]{leventhal2010randomized}. Inspired by this result, we 
design a splitting-based sampling coordinate descent (SP-SCD) method for the ILS problem (\ref{Sec11}), which can accelerate the SP-RK-RGS method.

\subsection{Outline}
The paper is organized as follows. We propose the SP method and present its convergence analysis in \Cref{sec:Splitting method}. The SP-RK-RGS and SP-SCD methods and their convergence analysis are provided in \Cref{sec:randomized method,sec:randomized sampling method}
, respectively. We report extensive numerical results in \Cref{sec:experiments}. Finally, the concluding remarks of the whole paper are given in \Cref{sec:conclusions}.
\subsection{Notation}
 For a matrix $G=(G_{(i, j)})\in \mathbb{R}^{m\times n}$, $G^{(i)}$, $G_{(j)}$, $\text{rank}(G)$, $\sigma_{\max}(G)$, $\sigma_{\min}(G)$, $\|G\|_2$, $\|G\|_F$, and $G^{\upsilon}$ denote its $i$th row, $j$th column, rank, largest singular value, smallest nonzero singular value, spectral norm, Frobenius norm, and the restriction onto the row indices in the set $\upsilon$, respectively. If $G$ is a square matrix, i.e., $m=n$,  $\lambda(G)$ stands for an eigenvalue of $G$, and $\rho(G)=\max\limits_{1\leq i\leq n}|\lambda_{i}(G)|$ represents its spectral radius; if $G\in \mathbb{R}^{n\times n}$ is SPD, we define the energy norm of any vector $x\in \mathbb{R}^{n}$ as $\| x\|_G:=\sqrt{x^TGx}$.
 For a vector $z\in \mathbb{R}^{n}$, $z^{(j)}$ represents its $j$th entry. In addition, we use $e_{(j)}$,  $ \mathbb{E}^{k-1}$, and $ \mathbb{E}$ to denote the $j$th column of the identity matrix $I$, the conditional expectation conditioned on the first $k-1$ iterations, and the full expected value, respectively, and let $[m]:=\{1, 2, 3, \ldots, m\}$ for an integer $m\geq 1$. Finally, we partition $A$ and $b$ in the ILS problem (\ref{Sec11}) as
 \begin{align}
A=\left[\begin{array}{cc}
A_1\\
A_2
\end{array}\right], \quad
b=\left[\begin{array}{cc}
b_1\\
b_2
\end{array}\right],\label{Sec121}
\end{align}
where $A_1\in\mathbb{R}^{p\times n}$, $A_2\in\mathbb{R}^{q\times n}$, $b_1\in\mathbb{R}^{p}$ and $b_2\in\mathbb{R}^{q}$.
\section{SP method for the ILS problem}
\label{sec:Splitting method}

From the partition form of $A$ defined in (\ref{Sec121}), we have
 \begin{align}
A^TJA=A_1^TA_1-A_2^TA_2,\notag 
\end{align}
and hence (\ref{Sec12}) can be equivalently rewritten as the following linear system
 \begin{align}
(A_1^TA_1-A_2^TA_2)x=A^TJb,  \notag
\end{align}
which can be
rewritten further as
 \begin{align}
 A_1^TA_1x=A_2^TA_2 x+A^TJb. \notag
\end{align}
Note that $ A^TJA $ is SPD as described in (\ref{Sec13}), 
so is $A_1^TA_1$. Thus, we can devise an update formula 
as follows:
\begin{align}
x_{k+1}= ( A_1^TA_1)^{-1}A_2^TA_2 x_{k}+( A_1^TA_1)^{-1}A^TJb, \quad k=0, 1, \ldots. \label{Sec23}
\end{align}
Therefore, we construct the SP method, i.e., \Cref{alg:splitting}.
\begin{algorithm}
\caption{SP method for the ILS problem (\ref{Sec11}).}
\label{alg:splitting}
\begin{algorithmic}[1]
\STATE{Input: $A$, $J$, $b$, and the initial estimate $x_0$. }
\STATE{Set $P=( A_1^TA_1)^{-1}.$}
\STATE{Set $B=PA_2^TA_2. $}
\STATE{Set $c=PA^TJb.$ }
\FOR{$k=0, 1, 2, \ldots $ until convergence,}
\STATE{Update $x_{k+1}=Bx_{k}+c.$ }
\ENDFOR
\end{algorithmic}
\end{algorithm}

\begin{remark}\label{remark-eff: SP}
The derivation and iterative scheme of the SP method are very simple and concise. Ordinarily, it should be discovered earlier or should have bad performance. However, we didn't find it in any literature on ILS problem and indeed find that its   performance 
in solving the problem (\ref{Sec11}) is quite encouraging; see the numerical results in 
\Cref{sec:experiments} for details. 
\end{remark}
\begin{remark}\label{remark-cost: SP}
In \Cref{alg:splitting}, computing step 2 to step 4 needs operation counts of about $pn^2+2n^3$, $qn^2+2n^3$, and $2mn+2n^2-2n$, respectively, and hence gives the total counts of about $mn^2+4n^3+2mn+2n^2-2n$. Updating $x_{k+1}$ in step 6 requires about $2n^2$ operation counts in each iteration and hence the total operation counts of the SP method are about
$$mn^2+4n^3+2mn+2n^2-2n+2n^2 \cdot T_{\text{SP}},$$
where $T_{\text{SP}}$ is the number of iterations. This cost is almost the same as the one of the USSOR method in \citet{song2020ussor}; see \Cref{subsec:Computational complexities} for the specific cost of the USSOR method. However, the SP method 
performs better in computing time and iteration numbers, which is confirmed by the numerical experiments in
\Cref{sec:experiments}. The phenomenon also appears in the SP-SCD method. Some possible reasons are given in \Cref{subsec:Computational complexities}.
\end{remark}

Now, we present the convergence analysis of the SP method.
 \begin{theorem} \label{thm:split}
For the ILS problem (\ref{Sec11}), 
the SP method, i.e., \Cref{alg:splitting}, converges for any initial vector $x_{0}$.
\end{theorem}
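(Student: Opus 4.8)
The plan is to show that the iteration matrix $B = (A_1^TA_1)^{-1}A_2^TA_2$ has spectral radius strictly less than $1$, since for the affine fixed-point iteration $x_{k+1} = Bx_k + c$ this is the standard necessary and sufficient condition for convergence to the unique fixed point from any starting vector $x_0$. The fixed point itself is automatically the solution of the normal equation $A^TJAx = A^TJb$ by construction, so once $\rho(B) < 1$ is established we are done.

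First I would observe that $B$ is similar to a symmetric matrix and hence has only real eigenvalues: writing $M = A_1^TA_1$, which is SPD because $A^TJA$ SPD forces $A_1$ to have full column rank, we have $M^{1/2} B M^{-1/2} = M^{-1/2} A_2^TA_2 M^{-1/2}$, which is symmetric positive semidefinite. Therefore every eigenvalue $\lambda$ of $B$ is real and satisfies $\lambda \geq 0$. Next I would bound $\lambda$ from above by $1$. The key is to feed in the hypothesis that $A^TJA = A_1^TA_1 - A_2^TA_2$ is positive definite. If $\lambda$ is an eigenvalue of $B$ with eigenvector $v \neq 0$, then $A_2^TA_2 v = \lambda A_1^TA_1 v$, so $v^T A_2^TA_2 v = \lambda\, v^T A_1^TA_1 v$. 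Since $v^T A_1^TA_1 v > 0$ (as $M$ is SPD), this gives $\lambda = \dfrac{v^TA_2^TA_2 v}{v^TA_1^TA_1 v}$. Then
\begin{align}
(1-\lambda)\, v^T A_1^TA_1 v = v^T A_1^TA_1 v - v^T A_2^TA_2 v = v^T (A^TJA) v > 0, \notag
\end{align}
and since the left factor $v^TA_1^TA_1 v$ is positive we conclude $\lambda < 1$. Combining with $\lambda \geq 0$ yields $0 \leq \lambda < 1$ for every eigenvalue, hence $\rho(B) < 1$.

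Finally I would invoke the classical convergence theorem for stationary iterative methods: since $\rho(B) < 1$, the matrix $I - B$ is nonsingular, the iteration $x_{k+1} = Bx_k + c$ has the unique fixed point $x_* = (I-B)^{-1}c$, and $x_k - x_* = B^k(x_0 - x_*) \to 0$ for any $x_0$; moreover $x_*$ solves the normal equation, hence the ILS problem. I do not anticipate a serious obstacle here — the argument is short — but the one point that needs care is justifying that $A_1$ has full column rank (equivalently that $A_1^TA_1$ is invertible so that the iteration is even well-defined): this follows because if $A_1 v = 0$ for some $v \neq 0$ then $v^T(A^TJA)v = -\|A_2 v\|_2^2 \leq 0$, contradicting positive definiteness of $A^TJA$. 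This is already implicitly asserted in the text ("$A_1^TA_1$ is SPD"), so it can be stated briefly. The mildly delicate step, then, is simply organizing the eigenvalue estimate cleanly, using the generalized eigenvalue relation $A_2^TA_2 v = \lambda A_1^TA_1 v$ rather than trying to manipulate $B$ directly.
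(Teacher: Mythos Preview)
Your proposal is correct and follows essentially the same route as the paper: both arguments use the similarity $M^{1/2}BM^{-1/2}=M^{-1/2}A_2^TA_2M^{-1/2}$ with $M=A_1^TA_1$, and both exploit positive definiteness of $A^TJA=A_1^TA_1-A_2^TA_2$ to force the eigenvalues into $[0,1)$. The only cosmetic difference is that the paper phrases the upper bound via the congruence identity $M^{-1/2}(A^TJA)M^{-1/2}=I-M^{-1/2}A_2^TA_2M^{-1/2}$ being SPD, whereas you write it out as a Rayleigh-quotient computation on the generalized eigenvalue relation $A_2^TA_2v=\lambda A_1^TA_1v$; the content is identical.
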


\begin{proof}
Since $A^TJA= A_1^TA_1-A_2^TA_2$ and $A_1^TA_1$ are SPD, it is easy to see that
$$(A_1^TA_1)^{-\frac{1}{2}}(A_1^TA_1-A_2^TA_2)(A_1^TA_1)^{-\frac{1}{2}}=I-(A_1^TA_1)^{-\frac{1}{2}} A_2^TA_2 (A_1^TA_1)^{-\frac{1}{2}}$$
is also SPD, which implies that the eigenvalues of $(A_1^TA_1)^{-\frac{1}{2}} A_2^TA_2 (A_1^TA_1)^{-\frac{1}{2}}$ satisfy
$$0\leq\lambda\left((A_1^TA_1)^{-\frac{1}{2}} A_2^TA_2 (A_1^TA_1)^{-\frac{1}{2}}\right)<1.$$
Thus, we have that
the spectral radius of the iteration matrix of the SP method is less than 1, i.e.,
  \begin{align}
\rho\left(\left(A_1^TA_1\right) ^{-1}A_2^TA_2\right)<1,\notag
\end{align}
which concludes the convergence of the SP method for any initial vector $x_{0}$.
\end{proof}

\begin{remark}\label{re-sp}
\Cref{thm:split} indicates that the SP method can be seen as an `unconditionally' convergent iterative method. Of course, the initial acknowledged condition (\ref{Sec13}) needs to be satisfied. 
\end{remark}

\section{Splitting-based RK-RGS method for the ILS problem}
\label{sec:randomized method}
In 
the SP method, 
we need to implement the matrix multiplication $ A_1^TA_1$ and compute its inverse $( A_1^TA_1)^{-1}$ and $( A_1^TA_1)^{-1}A_2^TA_2$. For the large-scale ILS problem (\ref{Sec11}), and especially when $p \gg q$, the cost is prohibitive. The extreme case on $p,q$ appears in Minkowski spaces \citep[][]{vsego2009two}, where $p=m-1$ and $q=1$. 
To reduce the cost, we will transform (\ref{Sec23}) into two subsystems and then adopt the RK and RGS methods to solve them.

We begin by briefly reviewing the RK and RGS methods, which play a foundational role in our proposed method. 

\subsection{RK method for linear problem}
\label{subsec:RK method}
Consider the consistent linear system
 \begin{align}
 X\beta=y, \label{Sec3.11}
\end{align}
where $X \in\mathbb{R}^{t\times l}$ is a full row rank matrix, 
$y\in\mathbb{R}^{t}$, and $\beta$ is the $l$-dimentional unknown vector. Starting from a vector $\beta_{0}$, the RK method 
repeats the following two steps in each iteration. First, it chooses a row $i_{k}$ of $X$ with probability proportional to the square of its Euclidean norm, i.e.,
 \begin{align}
\operatorname{Pr}\left(\mathrm{row}=i_{k}\right)=\frac{\| X^{(i_{k})}\|_{2}^{2}}{\| X\|_{F}^{2}}.\notag
\end{align}
Then, it projects the current iteration orthogonally onto the solution hyperplane of that row, i.e.,
 \begin{align}
\beta_{k+1}=\beta_{k}+\frac{ y^{(i_{k})}-X^{(i_{k})}\beta_k}{ \| X^{\left(i_{k}\right)} \|_{2}^{2}}( X^{(i_{k})})^{T}.  \notag  
\end{align}
This randomized method was first investigated in \cite{strohmer2009randomized}. Then, 
\cite{ma2015convergence} showed that it converges linearly to the least Euclidean norm solution
 \begin{align}
\beta_{LN}=X^T(X X^T)^{-1}y  \label{Sec3.13}
\end{align}
of (\ref{Sec3.11}). Specifically, the iteration $\beta_k$ satisfies the following expected linear rate:
 \begin{align}
\mathbb{E}\left[\left\|\beta_{k}-\beta_{LN}\right\|^{2}_2\right]\leq \left(1-\frac{\sigma_{\min}^{2}(X)}{\|X\|_{F}^{2}}\right)^{k}  \left\|\beta_{0}-\beta_{LN}\right\|_{2}^{2}. \label{Sec3.14}
\end{align}
Later, this convergence rate was further accelerated by using various strategies including block strategies \citep[see, e.g.,][]{needell2014paved,necoara2019faster, du2020randomized,zhang2021block}, greedy strategies \citep[see, e.g.,][]{nutini2016convergence,bai2018greedy, niu2020greedy, gower2021adaptive,Zhang2022MK}, and others 
\citep[see, e.g.,][]{lin2015extended, liu2016accelerated,jiao2017preasymptotic}. In addition, the RK method was also extended to many other problems such as the inconsistent problems \citep[see, e.g.,][]{zouzias2013randomized, wang2015randomized}, the ridge regression problems \citep[see, e.g.,][]{hefny2017rows,liu2019variant}, the feasibility problems \citep[see, e.g.,][]{de2017sampling, morshed2020accelerated,morshed2021sampling}, etc.

\subsection{RGS method for linear problem}
\label{subsec:RGS method}
Consider the consistent linear system
 \begin{align}
 X\beta=y, \label{Sec3.21}
\end{align}
where $X \in\mathbb{R}^{t\times l}$ is a full column rank matrix, 
$y\in\mathbb{R}^{t}$, and $\beta$ is the $l$-dimentional unknown vector. From an initial vector $\beta_{0}$, the RGS method relies on columns rather than rows in each iteration. 
Specifically, it first chooses a column $j_{k}$ of $X$ with probability proportional to the square of its Euclidean norm, i.e.,
 \begin{align}
\operatorname{Pr}\left(\mathrm{column}=j_{k}\right)=\frac{\| X_{(j_{k})}\|_{2}^{2}}{\| X\|_{F}^{2}}. \label{Sec3.21.1}
\end{align}
Then, it updates the iteration
 \begin{align}
\beta_{k+1}=\beta_{k}+\frac{X_{(j_{k})}^T \left(y-X\beta_k\right) }{ \| X_{\left(j_{k}\right)} \|_{2}^{2}}e_{(j_k)}.  \notag 
\end{align}
This method was proposed by Leventhal and Lewis
\citep[][]{leventhal2010randomized}. They
also proved that the RGS method converges to the unique solution
 \begin{align}
\beta^{\star}=(X^TX)^{-1}X^Ty  \label{Sec3.23}
\end{align}
of (\ref{Sec3.21}) with the following linear rate:
 \begin{align}
\mathbb{E}\left[\left\|\beta_{k}-\beta^{\star}\right\|^{2}_{X^{T}X}\right ]\leq \left(1-\frac{\sigma_{\min}^{2}(X)}{\left\|X\right\|_{F}^{2}}\right)^{k}  \left\|\beta_{0}-\beta^{\star}\right\|_{X^{T}X}^{2}.  \notag\label{Sec3.24}
\end{align}
Later, 
\cite{ma2015convergence} provided a unified analysis of the RK and RGS methods. 
Their convergence performance 
in the above two specific settings are listed in Table \ref{Sec3.2:tab1}.


\begin{table}[t!]
\tblcaption{Summary of convergence properties of the RK and RGS methods for the underdetermined system (\ref{Sec3.11}) and overdetermined system (\ref{Sec3.21}), where $\beta_{LN}$ defined in (\ref{Sec3.13}) denotes the least Euclidean norm solution of (\ref{Sec3.11}) and $\beta^{\star}$ defined in (\ref{Sec3.23}) is the unique solution of (\ref{Sec3.21}).}
{%
\begin{tabular}{@{}ccc@{}}
\tblhead{Method&  Underdetermined system (\ref{Sec3.11}):   &  Overdetermined system (\ref{Sec3.21}):  \cr
 &  convergence to $\beta_{LN}$ ?&    convergence to $\beta^{\star}$ ?}
RK      & \text{Yes} \citep[][]{ma2015convergence}       & \text{Yes} \citep[][]{strohmer2009randomized}      \cr
RGS      & \text{No}  \citep[][]{ma2015convergence}       & \text{Yes} \citep[][]{leventhal2010randomized}
\lastline
\end{tabular}
}
\label{Sec3.2:tab1}
\end{table}

\subsection{SP-RK-RGS method for the ILS problem}
\label{subsec:SP-RK-RGS method}

In the SP method, the update formula (\ref{Sec23}) can be equivalently rewritten as
 \begin{equation}
 A_1^TA_1x_{k+1}=\hat{b},  \label{Sec3.41}
\end{equation}
where $\hat{b}=A_2^TA_2x_{k}+A^TJb$. Considering the characteristics of the RK and RGS methods introduced above, we adopt them 
for solving the following two subsystems of (\ref{Sec3.41}):
\begin{align}
 A_1^Tw&=\hat{b},   \label{Sec3.32111}\\
 A_1z&=w,     \label{Sec3.33111}
\end{align}
in an alternating way. That is, in each iteration, we implement an iteration of the RK method on (\ref{Sec3.32111}) intertwined with an iteration of the RGS method to solve (\ref{Sec3.33111}). 
Thus, we only need to select 
two columns for update in each iteration. 
The specific algorithm is presented in 
\Cref{alg:SP-RK-RGS}.

\begin{algorithm}
\caption{SP-RK-RGS method for the ILS problem (\ref{Sec11}).}
\label{alg:SP-RK-RGS}
\begin{algorithmic}[1]
\STATE{Input: $A$, $J$, $b$, and initial estimate $x_0$. }
\STATE{Set $ \bar{A_2}=A_2^TA_2.$}
\STATE{Set $\bar{b}=A^TJb.$}
\FOR{$k=0, 1, 2, \ldots $ until convergence,}
\STATE{Compute $\hat{b}=\bar{A_2}x_{k}+\bar{b}$.}
\STATE{Set $z_0=w_0=0$.}
\FOR{$t=0, 1, 2, \ldots $ until convergence,}
\STATE{Pick $j_1\in[n]$ with probability $\frac{\left\| A_{1(j_{1})}\right\|_{2}^{2}}{\left\| A_1\right\|_{F}^{2}}$.}
\STATE{Update $w_{t+1}=w_{t}+\frac{ \hat{b}^{(j_{1})}-A_{1(j_{1})}^Tw_t}{ \left\| A_{1\left(j_{1}\right)} \right\|_{2}^{2}}  A_{1(j_{1})}  $.}
\STATE{Pick $j_2\in[n]$ with probability $\frac{\left\| A_{1(j_{2})}\right\|_{2}^{2}}{\left\| A_{1}\right\|_{F}^{2}}$.}
\STATE{Update $z_{t+1}=z_{t}+\frac{A_{1(j_{2})}^T \left(w_{t+1}-A_{1}z_t\right) }{ \left\| A_{1\left(j_{2}\right)} \right\|_{2}^{2}}e_{(j_2)}$.}
\ENDFOR
\STATE{Set $x_{k+1}=z_{t+1}.$ }
\ENDFOR
\end{algorithmic}
\end{algorithm}

\begin{remark}\label{remark-cost: SP-RK-RGS1}
The RK-RGS update in the SP-RK-RGS method is very like the one for the consistent factorized linear
system introduced in \cite{ma2018iterative}. The system is in the following form:
\begin{align}
X\beta=y, \quad   \text{with} \quad  X=UV,  \label{Sec3.31}
\end{align}
where $U \in\mathbb{R}^{t\times r}$, $V \in\mathbb{R}^{r\times l}$ and $\beta$ is the $l$-dimentional unknown vector. The difference lies in that they implemented the RK method twice in each iteration for the two subsystems of (\ref{Sec3.31}):
\begin{align*}
Ux=y, \quad  
 V\beta=x.   
\end{align*}
Thus, for our problem (\ref{Sec3.41}), the RK-RK method has to choose a column and a row of $A_1$ in each iteration. That is, it needs to access both the columns and rows of $A_1$  simultaneously. On the contrary, our RK-RGS update only needs to access the columns of $A_1$.
\end{remark}
\begin{remark}\label{remark-cost: SP-RK-RGS}
In \Cref{alg:SP-RK-RGS}, the main computations of the inner iteration are in step 7 to step 12, which need operation counts of about $\left(2pn+6p+2\right)\cdot T_{\text{RK-RGS}}$, where $T_{\text{RK-RGS}}$ is the number of iterations of the inner RK-RGS update. Updating $x_{k+1}$ from $x_k$ needs to compute step 5 to step 13, which requires operation counts of about $2n^2+\left(2pn+6p+2\right)\cdot T_{\text{RK-RGS}}$. Here, we assume that $T_{\text{RK-RGS}}$ is always the same for $k=0, 1,\ldots$. Then the total operation counts of the SP-RK-RGS method are about
$$qn^2+2mn-n+\left(2n^2+\left(2pn+6p+2\right)\cdot T_{\text{RK-RGS}} \right)\cdot T_{\text{SP-RK-RGS}},$$
where $T_{\text{SP-RK-RGS}}$ is the number of iterations of the outer update of the SP-RK-RGS method. This cost will be much less than the ones of the SP and USSOR methods when $m>p\gg n>q$. Furthermore, like the RK-RK method in \cite{ma2018iterative}, the SP-RK-RGS method can also be accelerated in a parallel computing platform.
\end{remark}

In the following, we consider the convergence of \Cref{alg:SP-RK-RGS}. A preliminary result is first presented as follows.

\begin{theorem} \label{thm:RK-RGS1}
Let $x_{k+1}=( A_1^TA_1)^{-1}\hat{b}$ be the unique solution of (\ref{Sec3.41}), $w^{\star}=A_1(A_1^TA_1)^{-1}\hat{b}$ be the least Euclidean norm solution of (\ref{Sec3.32111}), and $z^{\star}=(A_1^TA_1)^{-1}A_1^Tw^{\star}$ be the unique solution of (\ref{Sec3.33111}). Then solving (\ref{Sec3.32111}) and (\ref{Sec3.33111}) gets the unique solution of (\ref{Sec3.41}), i.e.,
 \begin{align}
 z^{\star}=x_{k+1}.\notag
\end{align}
\end{theorem}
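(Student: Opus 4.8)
The plan is to verify the claim by direct substitution, exploiting the fact that the two subsystems \eqref{Sec3.32111} and \eqref{Sec3.33111} have been chosen precisely so that their composition reconstructs the normal-equation form \eqref{Sec3.41}. First I would recall that, since $A_1^TA_1$ is SPD, $A_1$ has full column rank, so $A_1^TA_1$ is invertible and the quantities $w^{\star}=A_1(A_1^TA_1)^{-1}\hat{b}$ and $z^{\star}=(A_1^TA_1)^{-1}A_1^Tw^{\star}$ are well defined; I would also note that $w^{\star}$ as written lies in the column space of $A_1$, which is exactly the form of the least Euclidean norm solution of the underdetermined consistent system $A_1^Tw=\hat{b}$ (consistency holds because $\hat{b}=A_1^TA_1 x_{k+1}=A_1^T(A_1 x_{k+1})$, so $A_1 x_{k+1}$ is a solution).

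Next I would simply compute $z^{\star}$ by plugging the expression for $w^{\star}$ into the formula for $z^{\star}$:
\begin{align}
z^{\star}=(A_1^TA_1)^{-1}A_1^Tw^{\star}=(A_1^TA_1)^{-1}A_1^T\bigl(A_1(A_1^TA_1)^{-1}\hat{b}\bigr)=(A_1^TA_1)^{-1}(A_1^TA_1)(A_1^TA_1)^{-1}\hat{b}=(A_1^TA_1)^{-1}\hat{b}. \notag
\end{align}
The right-hand side is by definition $x_{k+1}$, which gives $z^{\star}=x_{k+1}$ as claimed. For completeness I would also check the intermediate consistency/uniqueness statements: that $z^{\star}$ is indeed \emph{the} solution of $A_1 z=w^{\star}$ (it is, since $A_1 z^{\star}=A_1(A_1^TA_1)^{-1}A_1^Tw^{\star}$ is the orthogonal projection of $w^{\star}$ onto the column space of $A_1$, and $w^{\star}$ already lies in that space, so $A_1 z^{\star}=w^{\star}$; uniqueness follows from full column rank of $A_1$), and that \eqref{Sec3.33111} with right-hand side $w^{\star}$ is consistent for the same reason.

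Honestly, there is no real obstacle here — this theorem is a bookkeeping identity that certifies the algorithmic decomposition is sound, and the entire content is the one-line cancellation $A_1^TA_1(A_1^TA_1)^{-1}=I$ above. The only thing requiring a sentence of care is justifying that the labels attached to $w^{\star}$ and $z^{\star}$ in the statement (``least Euclidean norm solution'' of \eqref{Sec3.32111}, ``unique solution'' of \eqref{Sec3.33111}) are consistent with how RK and RGS behave, i.e. matching the closed forms \eqref{Sec3.13} and \eqref{Sec3.23} from the two subsections above; once that identification is made, the substitution finishes the proof immediately.
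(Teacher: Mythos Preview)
Your proposal is correct and is essentially identical to the paper's own proof, which simply states that the result is immediate from the given expressions for $x_{k+1}$, $w^{\star}$, and $z^{\star}$. Your direct substitution $(A_1^TA_1)^{-1}A_1^T\bigl(A_1(A_1^TA_1)^{-1}\hat b\bigr)=(A_1^TA_1)^{-1}\hat b$ is exactly the one-line verification the paper has in mind.
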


\begin{proof}
The proof is immediate by considering the expressions of $x_{k+1}, w^{\star}$, and $z^{\star}$.
\end{proof}

\begin{theorem} \label{SP-RK-RGS method}
For the ILS problem (\ref{Sec11}), the SP-RK-RGS method, i.e., \Cref{alg:SP-RK-RGS}, converges for any initial vector $x_{0}$.
\end{theorem}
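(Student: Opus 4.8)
The plan is to exploit the two-level structure of \Cref{alg:SP-RK-RGS}: the outer loop is precisely the SP iteration \eqref{Sec23} in which the exact solve of \eqref{Sec3.41} has been replaced by the inner RK--RGS loop, and the inner loop must reproduce, in the limit, the exact SP update $\tilde x_{k+1}:=(A_1^TA_1)^{-1}\hat b_k$. I would organize the argument in three steps. First, I would record a quantitative form of \Cref{thm:split}. Set $B=(A_1^TA_1)^{-1}A_2^TA_2$ and $M=(A_1^TA_1)^{-1/2}A_2^TA_2(A_1^TA_1)^{-1/2}$; the proof of \Cref{thm:split} shows that the symmetric positive semidefinite matrix $M$ has $\rho:=\rho(M)<1$, and since $B=(A_1^TA_1)^{-1/2}M(A_1^TA_1)^{1/2}$, a one-line computation gives $\|By\|_{A_1^TA_1}\le\rho\|y\|_{A_1^TA_1}$ for all $y$. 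As $\tilde x_{k+1}-x^\star=B(x_k-x^\star)$ for the ILS solution $x^\star$, this means the \emph{exact} outer step is a $\rho$-contraction in the $\|\cdot\|_{A_1^TA_1}$ norm.

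Second, and this is the technical heart, I would show the inner RK--RGS loop converges in mean to $z^\star=\tilde x_{k+1}$ (using \Cref{thm:RK-RGS1}), conditionally on $x_k$. The $w$-iterates are plain RK on \eqref{Sec3.32111} from $w_0=0$, so \eqref{Sec3.14} gives $\mathbb E\big[\|w_t-w^\star\|_2^2\big]\le q_1^{\,t}\|w^\star\|_2^2$ with $q_1:=1-\sigma_{\min}^2(A_1)/\|A_1\|_F^2<1$. The $z$-iterates, however, run RGS on \eqref{Sec3.33111} against the \emph{moving} right-hand side $w_{t+1}$ instead of $w^\star$, so the RGS rate from \Cref{subsec:RGS method} does not apply verbatim; here I would do a perturbed-RGS estimate. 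Conditioning on the inner history and on $j_2$ and using $A_1z^\star=w^\star$, the usual one-step RGS computation yields $\mathbb E\big[\|A_1z_{t+1}-w^\star\|_2^2\big]\le q_1\,\mathbb E\big[\|A_1z_t-w^\star\|_2^2\big]+r_t$, where the perturbation $r_t$ is bounded, via Cauchy--Schwarz and Young's inequality, by a multiple of $\mathbb E\big[\|w_{t+1}-w^\star\|_2^2\big]$. Substituting the geometric bound on $\|w_t-w^\star\|_2^2$ gives a scalar recursion $a_{t+1}\le q_1a_t+Cq_1^{\,t}$ for $a_t:=\mathbb E\big[\|A_1z_t-w^\star\|_2^2\big]$, hence $a_t\to0$ geometrically. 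Since $A_1$ has full column rank ($A_1^TA_1$ is SPD), $\|A_1z_t-w^\star\|_2^2=\|z_t-z^\star\|_{A_1^TA_1}^2$, so $\mathbb E\big[\|z_t-z^\star\|_{A_1^TA_1}^2\big]\to0$, and $z^\star=\tilde x_{k+1}$ by \Cref{thm:RK-RGS1}.

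Third, I would stitch the two levels together. Running the inner loop ``until convergence'' means that at every outer step $x_{k+1}=\tilde x_{k+1}+\varepsilon_k$ with the conditional mean-square of $\|\varepsilon_k\|_{A_1^TA_1}$ as small as we please; combining with the $\rho$-contraction of Step~1 gives the inexact recursion $\|x_{k+1}-x^\star\|_{A_1^TA_1}\le\rho\|x_k-x^\star\|_{A_1^TA_1}+\|\varepsilon_k\|_{A_1^TA_1}$, and iterating it in expectation forces $\mathbb E\big[\|x_k-x^\star\|_{A_1^TA_1}^2\big]\to0$ for any $x_0$. In the idealized limit where the inner solve is exact, Step~2 is unnecessary and convergence follows at once from \Cref{thm:RK-RGS1} together with \Cref{thm:split}.

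The main obstacle is Step~2: the inner recursion is genuinely coupled, since the RGS sweep tracks the still-evolving Kaczmarz iterate $w_{t+1}$ rather than the true $w^\star$, so one cannot simply cite the RGS convergence rate. The real work is choosing the right coupled quantity --- here the pair $\big(\mathbb E\|w_t-w^\star\|_2^2,\ \mathbb E\|A_1z_t-w^\star\|_2^2\big)$ --- and checking that the induced two-term recursion contracts; by comparison, propagating a mean-square inner bound through the outer loop in Step~3 is routine.
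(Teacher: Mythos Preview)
Your approach is correct and essentially matches the paper's: both reduce to showing the inner RK--RGS loop converges in expectation to $z^\star=\tilde x_{k+1}$ (after which \Cref{thm:split} handles the outer loop), and both analyze the $z$-iteration as RGS perturbed by the error $w_{t+1}-w^\star$, arriving at the same scalar recursion $a_{t+1}\le q_1 a_t + Cq_1^{\,t+1}$. The one difference worth noting is that the paper observes the cross term in the one-step expansion of $\|A_1z_{t+1}-w^\star\|_2^2$ vanishes \emph{exactly} by orthogonality (the ideal-RGS component of $A_1z_{t+1}-w^\star$ is orthogonal to $A_{1(j_2)}$ while the perturbation component is parallel to it), so Cauchy--Schwarz and Young are unnecessary and the contraction factor is exactly $q_1$; conversely, your Step~3, which propagates inexact inner solves through the outer $\rho$-contraction, is more careful than the paper, which simply assumes the inner loop is run to exact convergence.
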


\begin{proof}
Considering  \Cref{thm:split,thm:RK-RGS1} 
, to prove the convergence of the SP-RK-RGS method, 
it suffices to show that the sequence $\left\{z_t\right\}_{t=0}^{\infty}$ generated by the inner iteration, i.e., the RK-RGS update, starting from an initial guess $z_0=0$, converges to $z^{\star}$ in expectation.

To the above end, we first set $\tilde{z}_{t}=z_{t-1}+\frac{A_{1(j_{2})}^T \left(w^{\star}-A_{1}z_{t-1}\right) }{ \| A_{1\left(j_{2}\right)} \|_{2}^{2}}e_{(j_2)}$. Then
 \begin{align}
\mathbb{E}^{t-1}\left[\left\| z_t-z^{\star}\right\|^2_{A_{1}^TA_{1}}\right]
 &=\mathbb{E}^{t-1}\left[\left\| A_{1}z_t-A_{1}z^{\star}\right\|^2_{2}\right] \notag
 \\
&=\mathbb{E}^{t-1}\left[\left\| A_{1}z_t-A_{1}z^{\star}+A_{1}\tilde{z}_{t}-A_{1}\tilde{z}_{t}\right\|^2_{2}\right] \notag
 \\
&=\mathbb{E}^{t-1}\left[\left\|  A_{1}\tilde{z}_{t}-A_{1}z^{\star}\right\|^2_{2}\right] +\mathbb{E}^{t-1}\left[\left\| A_{1}z_t -A_{1}\tilde{z}_{t}\right\|^2_{2}\right] \notag
 \\
&+2\mathbb{E}^{t-1}\left[\left < A_{1}\tilde{z}_{t}-A_{1}z^{\star}, A_{1}z_t -A_{1}\tilde{z}_{t}\right> \right].\label{Sec3.34}
\end{align}

Next, we show that $\mathbb{E}^{t-1}\left[\left < A_{1}\tilde{z}_{t}-A_{1}z^{\star}, A_{1}z_t -A_{1}\tilde{z}_{t}\right> \right]=0$. From \Cref{alg:SP-RK-RGS} and the definition of $\tilde{z}_{t}$, it follows that
 \begin{align}
&\mathbb{E}^{t-1}\left[\left < A_{1}\tilde{z}_{t}-A_{1}z^{\star}, A_{1}z_t -A_{1}\tilde{z}_{t}\right> \right] \notag
\\
&= \mathbb{E}^{t-1}\left[ \left <A_{1}z_{t-1}-A_{1}z^{\star}+ \frac{A_{1(j_{2})}^T \left(w^{\star}-A_{1}z_{t-1}\right) }{ \| A_{1\left(j_{2}\right)} \|_{2}^{2}}A_{1(j_2)},\frac{A_{1(j_{2})}^T \left(w_{t }-w^{\star}\right) }{ \| A_{1\left(j_{2}\right)} \|_{2}^{2}}A_{1(j_2)}   \right>  \right] \notag
\\
&=\mathbb{E}^{t-1}\left[ \left <A_{1}z_{t-1}-A_{1}z^{\star},\frac{A_{1(j_{2})}^T \left(w_{t }-w^{\star}\right) }{ \| A_{1\left(j_{2}\right)} \|_{2}^{2}}A_{1(j_2)}  \right>  \right]\notag
\\
&\quad+\mathbb{E}^{t-1}\left[ \left < \frac{A_{1(j_{2})}^T \left(w^{\star}-A_{1}z_{t-1}\right) }{ \| A_{1\left(j_{2}\right)} \|_{2}^{2}}A_{1(j_2)},\frac{A_{1(j_{2})}^T \left(w_{t }-w^{\star}\right) }{ \| A_{1\left(j_{2}\right)} \|_{2}^{2}}A_{1(j_2)} \right>  \right]\notag
\\
&=\sum_{j_{2}=1}^{n}\frac{\left \| A_{1(j_{2})} \right\|_{2}^{2}}{\left \| A_{1} \right\|_{F}^{2}}\left <A_{1}z_{t-1}-A_{1}z^{\star},\frac{A_{1(j_{2})}^T \left(w_{t }-w^{\star}\right) }{ \| A_{1\left(j_{2}\right)} \|_{2}^{2}}A_{1(j_2)}  \right>\notag
\\
&\quad+\sum_{j_{2}=1}^{n}\frac{\left \| A_{1(j_{2})}\right\|_{2}^{2}}{\left \| A_{1}\right\|_{F}^{2}}  \frac{A_{1(j_{2})}^T \left(w^{\star}-A_{1}z_{t-1}\right)\cdot A_{1(j_{2})}^T \left(w_{t }-w^{\star}\right) }{ \| A_{1\left(j_{2}\right)} \|_{2}^{2}}   \notag
\\
&=\frac{\left< A_{1}z_{t-1}-A_{1}z^{\star}, A_{1}A_{1}^T\left(w_{t }-w^{\star}\right)\right>}{\left \| A_{1} \right\|_{F}^{2}}+\frac{\left< A_{1}^T\left(w^{\star}-A_{1}z_{t-1}\right), A_{1}^T \left(w_{t }-w^{\star}\right)\right>}{\left \| A_{1} \right\|_{F}^{2}}, \notag
\end{align}
which together with the fact that $A_{1}z^{\star}=w^{\star}$ yields
\begin{align}
\mathbb{E}^{t-1}\left[\left < A_{1}\tilde{z}_{t}-A_{1}z^{\star}, A_{1}z_t -A_{1}\tilde{z}_{t}\right> \right] &=\frac{\left< A_{1}z_{t-1}-w^{\star}, A_{1}A_{1}^T\left(w_{t }-w^{\star}\right)\right>}{\left \| A_{1} \right\|_{F}^{2}}\notag
\\
&+\frac{\left< A_{1}^T\left(w^{\star}-A_{1}z_{t-1}\right), A_{1}^T \left(w_{t }-w^{\star}\right)\right>}{\left \| A_{1} \right\|_{F}^{2}}=0.\notag
\end{align}
So, the desired result holds. Therefore, (\ref{Sec3.34}) is reduced to
 \begin{align}
 \mathbb{E}^{t-1}\left[\left\| z_t-z^{\star}\right\|^2_{A_{1}^TA_{1}}\right]=\mathbb{E}^{t-1}\left[\left\|  A_{1}\tilde{z}_{t}-A_{1}z^{\star}\right\|^2_{2}\right] +\mathbb{E}^{t-1}\left[\left\| A_{1}z_t -A_{1}\tilde{z}_{t}\right\|^2_{2}\right] .\label{Sec3.35}
\end{align}

Next, we show that $\left\|  A_{1}\tilde{z}_{t}-A_{1}z^{\star}\right\|^2_{2}=\left\|  A_{1}z_{t-1}-A_{1}z^{\star}\right\|^2_{2}-\left\|  A_{1}\tilde{z}_{t}-A_{1}z_{t-1}\right\|^2_{2}$. From the update formula of $\tilde{z}_{t}$, we have
\begin{align}
A_{1}\left(\tilde{z}_{t}- z_{t-1}\right)= \frac{A_{1(j_{2})}^T \left(w^{\star}-A_{1}z_{t-1}\right) }{ \| A_{1\left(j_{2}\right)} \|_{2}^{2}}A_{1(j_2)}, \label{Sec3.35.1}
\end{align}
which implies that $ A_{1}\left(\tilde{z}_{t}- z_{t-1}\right)$ is parallel to $A_{1\left(j_{2}\right)}$. Meanwhile,
\begin{align}
A_{1}\left(\tilde{z}_{t}- z^{\star}\right)&=A_{1}\left(z_{t-1}-z^{\star}+\frac{A_{1(j_{2})}^T \left(w^{\star}-A_{1}z_{t-1}\right) }{ \left\| A_{1\left(j_{2}\right)} \right\|_{2}^{2}}e_{(j_2)}\right),\notag
\end{align}
which together with the fact that $A_{1}z^{\star}=w^{\star}$ gives
\begin{align}
A_{1}\left(\tilde{z}_{t}- z^{\star}\right)&=\left(I- \frac{ A_{1\left(j_{2}\right)}  A_{1\left(j_{2}\right)} ^T}{\left\| A_{1\left(j_{2}\right)} \right\|_{2}^{2}}\right)A_{1}\left( z_{t-1}-z_\star\right).\notag
\end{align}
Further, we can check
\begin{align}
A_{1(j_{2})}^T A_{1}\left(\tilde{z}_{t}- z^{\star}\right)&=A_{1(j_{2})}^T \left(I- \frac{ A_{1\left(j_{2}\right)}  A_{1\left(j_{2}\right)} ^T}{\left\| A_{1\left(j_{2}\right)} \right\|_{2}^{2}}\right)A_{1}\left( z_{t-1}-z_\star\right)=0.\notag
\end{align}
Then $A_{1}\left(\tilde{z}_{t}- z^{\star}\right)$ is orthogonal to $A_{1(j_{2})}$ and hence the vector $A_{1}\left(\tilde{z}_{t}- z_{t-1}\right)$ is perpendicular to the vector $A_{1}\left(\tilde{z}_{t}- z^{\star}\right)$. Thus, by the Pythagorean theorem, we get the desired result
$$\left\|  A_{1}\tilde{z}_{t}-A_{1}z^{\star}\right\|^2_{2}=\left\|  A_{1}z_{t-1}-A_{1}z^{\star}\right\|^2_{2}-\left\|  A_{1}\tilde{z}_{t}-A_{1}z_{t-1}\right\|^2_{2}.$$
Substituting it into (\ref{Sec3.35}) leads to
 \begin{align}
 \mathbb{E}^{t-1}\left[\left\| z_t-z^{\star}\right\|^2_{A_{1}^TA_{1}}\right]
 &=\mathbb{E}^{t-1}\left[\left\|  A_{1}z_{t-1}-A_{1}z^{\star}\right\|^2_{2}\right]-\mathbb{E}^{t-1}\left[\left\|  A_{1}\tilde{z}_{t}-A_{1}z_{t-1}\right\|^2_{2}\right] \notag
  \\
  &\quad+\mathbb{E}^{t-1}\left[\left\| A_{1}z_t -A_{1}\tilde{z}_{t}\right\|^2_{2}\right] .\notag
\end{align}
Thus, by using (\ref{Sec3.35.1}), the update rule of $z_{t}$, and $\mathbb{E}^{t-1}=\mathbb{E}^{t-1}_{w}\mathbb{E}^{t-1}_{z}$, we have
 \begin{align}
 \mathbb{E}^{t-1}\left[\left\| z_t-z^{\star}\right\|^2_{A_{1}^TA_{1}}\right]
 &= \left\|  A_{1}z_{t-1}-A_{1}z^{\star}\right\|^2_{2} -\mathbb{E}^{t-1}\left[\left\|  \frac{A_{1(j_{2})}^T \left(w^{\star}-A_{1}z_{t-1}\right) }{ \left\| A_{1\left(j_{2}\right)} \right\|_{2}^{2}}A_{1(j_2)}\right\|^2_{2}\right] \notag
  \\
  &\quad+\mathbb{E}^{t-1}\left[\left\| \frac{A_{1(j_{2})}^T \left(w_{t }-w^{\star}\right) }{ \| A_{1\left(j_{2}\right)} \|_{2}^{2}}A_{1(j_2)} \right\|^2_{2}\right]  \notag
  \\
  &= \left\|  A_{1}z_{t-1}-A_{1}z^{\star}\right\|^2_{2} -\sum_{j_{2}=1}^{n}\frac{\left \| A_{1(j_{2})}\right\|_{2}^{2}}{\left \| A_{1}\right\|_{F}^{2}}  \frac{\left(A_{1(j_{2})}^T \left(w^{\star}-A_{1}z_{t-1}\right) \right)^2}{ \left\| A_{1\left(j_{2}\right)} \right\|_{2}^{2}} \notag
  \\
  &\quad+\mathbb{E}^{t-1}_{w}\mathbb{E}^{t-1}_{z}\left[ \frac{\left(A_{1(j_{2})}^T \left(w_{t }-w^{\star}\right)\right)^2 }{ \left\| A_{1\left(j_{2}\right)} \right\|_{2}^{2}} \right]  \notag
 \\
  &= \left\|  A_{1}z_{t-1}-A_{1}z^{\star}\right\|^2_{2} - \frac{ \left \| A_{1}^T \left(w^{\star}-A_{1}z_{t-1}\right) \right\|_2 ^2}{\left \| A_{1}\right\|_{F}^{2}} +\mathbb{E}^{t-1}_{w} \left[ \frac{\left\| A_{1} ^T \left(w_{t }-w^{\star}\right)\right\|^2_{2} }{ \left\| A_{1}\right\|_{F}^{2}} \right]. \notag
\end{align}
Further, noting $A_{1}z^{\star}=w^{\star}$ and $\left\|A_{1}^T\left(  A_{1}z_{t-1}-A_{1}z^{\star}\right)\right\|^2_{2}\geq\sigma_{\min}^{2}(A_{1})\left\|  A_{1}z_{t-1}-A_{1}z^{\star} \right\|^2_{2}$, we get
 \begin{align}
 \mathbb{E}^{t-1}\left[\left\| z_t-z^{\star}\right\|^2_{A_{1}^TA_{1}}\right]
  & \leq  \left(1-\frac{\sigma_{\min}^{2}(A_{1})}{\left\|A_{1}\right\|_{F}^{2}}\right) \left\|  A_{1}z_{t-1}-A_{1}z^{\star}\right\|^2_{2} +\mathbb{E}^{t-1}_{w} \left[ \frac{\left\| A_{1} ^T \left(w_{t }-w^{\star}\right)\right\|^2_{2} }{ \left\| A_{1}\right\|_{F}^{2}} \right]  \notag
  \\
   & \leq  \left(1-\frac{\sigma_{\min}^{2}(A_{1})}{\left\|A_{1}\right\|_{F}^{2}}\right) \left\|   z_{t-1}- z^{\star}\right\|^2_{A_{1}^TA_{1}} +\frac{\sigma_{\max}^{2}(A_{1}) }{ \left\| A_{1}\right\|_{F}^{2}}  \mathbb{E}^{t-1}_{w} \left[\left\| w_{t }-w^{\star} \right\|^2_{2}  \right],  \notag
\end{align}
which together with a result derived from the convergence property of the RK method discussed in (\ref{Sec3.14}), i.e.,
 \begin{align}
\mathbb{E} \left[\left\| w_{t }-w^{\star} \right\|^2_{2} \right]  \leq \left(1-\frac{\sigma_{\min}^{2}(A_{1})}{\|A_{1}\|_{F}^{2}}\right)^{t}  \left\| w^{\star}\right\|_{2}^{2}, \notag
\end{align}
and the law of total expectation, implies
 \begin{align}
  &\mathbb{E} \left[\left\| z_t-z^{\star}\right\|^2_{A_{1}^TA_{1}}\right]\notag
  \\
   & \leq   \left(1-\frac{\sigma_{\min}^{2}(A_{1})}{\left\|A_{1}\right\|_{F}^{2}}\right) \mathbb{E} \left[ \left\|   z_{t-1}- z^{\star}\right\|^2_{A_{1}^TA_{1}} \right]+\frac{\sigma_{\max}^{2}(A_{1}) }{ \left\| A_{1}\right\|_{F}^{2}}  \left(1-\frac{\sigma_{\min}^{2}(A_{1})}{\left\|A_{1}\right\|_{F}^{2}}\right)^{t}  \left\| w^{\star}\right\|_{2}^{2}   \notag
   \\
   & \leq   \left(1-\frac{\sigma_{\min}^{2}(A_{1})}{\left\|A_{1}\right\|_{F}^{2}}\right)^2 \mathbb{E} \left[ \left\|   z_{t-2}- z^{\star}\right\|^2_{A_{1}^TA_{1}} \right]+2\frac{\sigma_{\max}^{2}(A_{1}) }{ \left\| A_{1}\right\|_{F}^{2}}  \left(1-\frac{\sigma_{\min}^{2}(A_{1})}{\left\|A_{1}\right\|_{F}^{2}}\right)^{t}  \left\| w^{\star}\right\|_{2}^{2}   \notag
    \\
   & \leq \ldots  \leq \left(1-\frac{\sigma_{\min}^{2}(A_{1})}{\left\|A_{1}\right\|_{F}^{2}}\right)^t \left\|   z^{\star}\right\|^2_{A_{1}^TA_{1}}  +t\frac{\sigma_{\max}^{2}(A_{1}) }{ \left\| A_{1}\right\|_{F}^{2}}  \left(1-\frac{\sigma_{\min}^{2}(A_{1})}{\left\|A_{1}\right\|_{F}^{2}}\right)^{t}  \left\| w^{\star}\right\|_{2}^{2}.  \notag
\end{align}
This completes the proof.
 \end{proof}

\begin{remark}
\label{re-th:sp-rk-rgs}
Similar to the discussion in \Cref{re-sp} for the SP method, from \Cref{SP-RK-RGS method}, we find that the SP-RK-RGS method also converges `unconditionally'.
\end{remark}

 \section{SP-SCD method for the ILS problem}
\label{sec:randomized sampling method}

In \Cref{alg:SP-RK-RGS}, if we set the two columns in each iteration to be the same, i.e.,  $j_1=j_2=j$, then the inner iteration, i.e., the RK-RGS update, reduces to the RCD update. Specifically,
 \begin{align}
z_{t+1}
&=
z_{t}+\frac{A_{1(j)}^T \left(w_{t+1}-A_{1}z_t\right) }{ \left\| A_{1\left(j\right)} \right\|_{2}^{2}}e_{(j)}  \notag
\\
&=z_{t}+\frac{A_{1(j)}^T \left(w_{t}+\frac{ \hat{b}^{(j)}-A_{1(j)}^Tw_t}{ \left\| A_{1\left(j\right)} \right\|_{2}^{2}}  A_{1(j)}-A_{1}z_t\right) }{ \left\| A_{1\left(j\right)} \right\|_{2}^{2}}e_{(j)}
\notag
\\
&=z_{t}+\frac{A_{1(j)}^T w_{t}+  \hat{b}^{(j)}-A_{1(j)}^Tw_t -A_{1(j)}^TA_{1}z_t  }{ \left\| A_{1\left(j\right)} \right\|_{2}^{2}}e_{(j)}
\notag
\\
&=z_{t}+\frac{   \hat{b}^{(j)}  -A_{1(j)}^TA_{1}z_t  }{ \left(A_{1}^TA_{1}\right)_{\left(j, j\right)}}e_{(j)},
\notag
\end{align}
where $ \hat{b}^{(j)}  -A_{1(j)}^TA_{1}z_t$ is the $j$-th coordinate of the gradient and $ \left(A_{1}^TA_{1}\right)_{\left(j, j\right)}$ is its Lipschitz constant. Hence, the above formula can be seen as the CD update for $\min\limits_{z}\frac{1}{2}z^TA_{1}^TA_{1}z-\hat{b}^Tz$, which has the same solution as the positive definite linear system (\ref{Sec3.41}) \citep[][]{leventhal2010randomized}. By the way, the relationship between the RK, RGS and RCD methods was discussed in \citet{hefny2017rows} in detail. In particular, the RK and RGS methods can be viewed as different variants of the RCD method.

Based on the above discussions and inspired by \citet{de2017sampling} and \citet{haddock2021greed}, similar to the SP-RK-RGS method, we  propose the SP-SCD method for solving the ILS problem (\ref{Sec11}). That is, the inner iteration in the SP-RK-RGS method is replaced by the sampling coordinate descent (SCD) update. The specific algorithm is summarized in \Cref{alg:SP-SCD}.
\begin{algorithm}
\caption{SP-SCD method for the ILS problem (\ref{Sec11}).}
\label{alg:SP-SCD}
\begin{algorithmic}[1]
\STATE{Input: $A$, $J$, $b$, and initial estimate $x_0$. }
\STATE{Set $ \bar{A_1}=A_1^TA_1.$}
\STATE{Set $ \bar{A_2}=A_2^TA_2.$}
\STATE{Set $\bar{b}=A^TJb.$}
\FOR{$k=0, 1, 2, \ldots $ until convergence,}
\STATE{Compute $\hat{b}=\bar{A_2}x_{k}+\bar{b}$.}
\STATE{Set $\beta_0=0$.}
\FOR{$t=0, 1, 2, \ldots $ until convergence,}
\STATE{Generate a positive integer $\alpha_t\in[n]$ at random.}
\STATE{Choose an index subset $\tau_t$ of size $\alpha_t$  from among $ [n]$ with probability
 \begin{align}
p\left(\tau_t, \beta_{t}\right)=\frac{ \bar{A_1}_{\left(s\left(\tau_{t}, \beta_{t}\right ), s\left(\tau_{t}, \beta_{t}\right )\right)} }{\sum\limits_{\tau\in\binom{[n]}{\alpha_t}} \bar{A_1}_{\left(s\left(\tau, \beta_{t}\right ), s\left(\tau, \beta_{t}\right )\right)} },\label{Sec4.111}
\end{align}
where $s(\tau, \beta_{t})=\text{arg}\max\limits_{s\in\tau}\left(\hat{b}^{(s)}-\bar{A_1}^{(s)}\beta_{t}\right)^2$.}
\STATE{Set $j_{t}=s(\tau_t, \beta_{t})$.}
\STATE{Update $\beta_{t+1}=\beta_{t}+\frac{ \hat{b}^{\left(j_{t}\right)}-\bar{A_1}^{\left(j_{t}\right)}\beta_t}{ \bar{A_1}_{\left(j_{t}, j_{t}\right)}}  e_{(j_{t})}  $.}
\ENDFOR
\STATE{Set $x_{k+1}=\beta_{t+1}.$ }
\ENDFOR
\end{algorithmic}
\end{algorithm}

\begin{remark}\label{remark-cost: SP-SCD}
Unlike the SP-RK-RGS method or the SP-RCD method (it is the immediate result of the SP-RK-RGS method with $j_1=j_2=j$),
the probability utilized in the SP-SCD method is adaptive. Specifically, the probability used in \Cref{alg:SP-SCD}, i.e., (\ref{Sec4.111}),  depends on the value of $\bar{A_1}_{\left(s\left(\tau_{t}, \beta_{t}\right ), s\left(\tau_{t}, \beta_{t}\right )\right)} $, which gives the largest residual value among $\left(\hat{b}^{(s)}-\bar{A_1}^{(s)}\beta_{t}\right)^2$ where $ s\in\tau_t $. In particular, if $\alpha_t=1$, the probability reduces to
\begin{align}
p\left(\tau_t, \beta_{t}\right)=\frac{ \bar{A_1}_{\left(j_{t}, j_{t}\right )} }{\sum\limits_{j_{t}\in [n]}  \bar{A_1}_{\left(j_{t}, j_{t}\right )} }=\frac{\| A_{1(j_{k})}\|_{2}^{2}}{\| A_1\|_{F}^{2}},\notag
\end{align}
which is a fixed probability 
equivalent to the one of the RGS method listed in (\ref{Sec3.21.1}). If $\alpha_t=n$, the probability reduces to
\begin{align}
p\left(\tau_t, \beta_{t}\right)=1,\notag
\end{align}
which is equivalent to grasping the index corresponding to the largest magnitude entry of the residual vector as used in the Motzkin method \citep[][]{motzkin_schoenberg_1954}. If $\bar{A_1}_{\left(i, i\right )}=\bar{A_1}_{\left(j, j\right )}$ for any $i, j \in [n]$, the probability reduces to
\begin{align}
p\left(\tau_t, \beta_{t}\right)=\frac{1}{\binom{[n]}{\alpha_t}},\notag
\end{align}
which is a uniform probability 
equivalent to the strategy discussed in \cite{de2017sampling}. In the numerical experiments in \Cref{sec:experiments}, we mainly consider the last strategy. 
\end{remark}

Now, we present the convergence analysis for the SP-SCD method.
\begin{theorem} \label{SP-SCD method}
For the ILS problem (\ref{Sec11}), the SP-SCD method, i.e., \Cref{alg:SP-SCD}, converges for any initial vector $x_{0}$.
\end{theorem}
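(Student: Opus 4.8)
The plan is to follow the template of the proof of \Cref{SP-RK-RGS method}: reduce the convergence of the whole SP-SCD iteration to the convergence in expectation of the inner loop, and then analyse that inner loop as randomized coordinate descent on an SPD quadratic. By \Cref{thm:split} the exact outer update $x_{k+1}=\bar{A_1}^{-1}\hat b$ converges for any $x_0$; and, as already noted before \Cref{alg:SP-SCD}, the inner update $\beta_{t+1}=\beta_t+\frac{\hat b^{(j_t)}-\bar{A_1}^{(j_t)}\beta_t}{\bar{A_1}_{(j_t,j_t)}}e_{(j_t)}$ is the coordinate-descent step for $f(\beta)=\tfrac12\beta^T\bar{A_1}\beta-\hat b^T\beta$, whose unique minimiser is $\bar{A_1}^{-1}\hat b=x_{k+1}$ (this plays the role of \Cref{thm:RK-RGS1}). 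Hence it suffices to prove that the inner sequence $\{\beta_t\}$, started from $\beta_0=0$, converges in expectation to $\beta^\star:=x_{k+1}$.

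For the inner analysis I would set $r_t:=\hat b-\bar{A_1}\beta_t=\bar{A_1}(\beta^\star-\beta_t)$, so that $r_t^{(s)}=\hat b^{(s)}-\bar{A_1}^{(s)}\beta_t$ and $s(\tau_t,\beta_t)=\arg\max_{s\in\tau_t}\big(r_t^{(s)}\big)^2$. Since the update is an exact one-dimensional minimisation of $f$ along $e_{(j_t)}$, expanding the energy norm gives the identity
\[
\|\beta_{t+1}-\beta^\star\|_{\bar{A_1}}^2=\|\beta_t-\beta^\star\|_{\bar{A_1}}^2-\frac{\big(r_t^{(j_t)}\big)^2}{\bar{A_1}_{(j_t,j_t)}}\ \le\ \|\beta_t-\beta^\star\|_{\bar{A_1}}^2 ,
\]
so the energy-norm error never increases. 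Because $j_t$ is the residual-maximiser over the sampled set $\tau_t$ and $\tau_t$ has probability (\ref{Sec4.111}), the diagonal factor cancels in the conditional expectation over $\tau_t$, and for every admissible value $a$ of $\alpha_t$,
\[
\mathbb{E}\!\left[\frac{\big(r_t^{(j_t)}\big)^2}{\bar{A_1}_{(j_t,j_t)}}\,\middle|\,\alpha_t=a,\ \beta_t\right]=\frac{\sum_{\tau\in\binom{[n]}{a}}\max_{s\in\tau}\big(r_t^{(s)}\big)^2}{\sum_{\tau\in\binom{[n]}{a}}\bar{A_1}_{(s(\tau,\beta_t),s(\tau,\beta_t))}}\ \ge\ \frac{\|r_t\|_2^2}{n\,\max_i\bar{A_1}_{(i,i)}} ,
\]
where the bound uses $\sum_{\tau}\max_{s\in\tau}\big(r_t^{(s)}\big)^2\ge\tfrac1a\binom{n-1}{a-1}\|r_t\|_2^2=\tfrac1n\binom{n}{a}\|r_t\|_2^2$ together with $\sum_{\tau}\bar{A_1}_{(s(\tau,\beta_t),s(\tau,\beta_t))}\le\binom{n}{a}\max_i\bar{A_1}_{(i,i)}$; crucially this lower bound does not depend on $a$.

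Combining the two displays with $\|r_t\|_2^2=\|\bar{A_1}(\beta_t-\beta^\star)\|_2^2\ge\sigma_{\min}^2(A_1)\|\beta_t-\beta^\star\|_{\bar{A_1}}^2$ and the law of total expectation over $\alpha_t$ yields
\[
\mathbb{E}\!\left[\|\beta_{t+1}-\beta^\star\|_{\bar{A_1}}^2\,\middle|\,\beta_t\right]\ \le\ \left(1-\frac{\sigma_{\min}^2(A_1)}{n\,\max_i\bar{A_1}_{(i,i)}}\right)\|\beta_t-\beta^\star\|_{\bar{A_1}}^2 ,
\]
and the contraction factor lies in $[0,1)$ because $n\max_i\bar{A_1}_{(i,i)}\ge\|A_1\|_F^2\ge\sigma_{\min}^2(A_1)>0$ (recall $\bar{A_1}=A_1^TA_1$ is SPD). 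Iterating from $\beta_0=0$ and taking full expectation gives $\mathbb{E}[\|\beta_t-\beta^\star\|_{\bar{A_1}}^2]\to0$, i.e. $\beta_t\to x_{k+1}$ in expectation; feeding this back into the convergent outer SP recursion exactly as in the proof of \Cref{SP-RK-RGS method} completes the argument.

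The only genuinely non-routine step is the uniform-in-$\alpha_t$ lower bound on the expected per-step decrease: the selection rule here is a sampling-plus-maximum-residual (Motzkin-type) rule whose underlying sampling probability (\ref{Sec4.111}) is itself adaptive, so one cannot simply invoke the fixed-probability RGS/RCD rate of \citet{leventhal2010randomized} (which is exactly the case $\alpha_t=1$); handling all subset sizes simultaneously is precisely what the two combinatorial inequalities above are for. A secondary point, also left implicit in the proof of \Cref{SP-RK-RGS method}, is the book-keeping needed because the inner loop is run for only finitely many steps, so each $x_{k+1}$ is only an inexact SP update; this is harmless since the inner error can be driven below any prescribed tolerance, but a fully rigorous statement would phrase the outer iteration as an inexact fixed-point iteration.
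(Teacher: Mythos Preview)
Your argument is correct and follows the same skeleton as the paper: reduce to the inner loop via \Cref{thm:split}, derive the exact energy-norm decrease identity $\|\beta_{t+1}-\beta^\star\|_{\bar A_1}^2=\|\beta_t-\beta^\star\|_{\bar A_1}^2-(r_t^{(j_t)})^2/\bar A_{1(j_t,j_t)}$, take conditional expectation under the sampling rule (\ref{Sec4.111}) so that the diagonal weight cancels, and finish with the bound $\|\bar A_1(\beta_t-\beta^\star)\|_2^2\ge\sigma_{\min}^2(A_1)\|\beta_t-\beta^\star\|_{\bar A_1}^2$. The only genuine difference is in how the combinatorial sums are handled. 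The paper keeps the exact ratio, introducing $\xi_j=\sum_\tau\|\cdot\|_2^2/\sum_\tau\|\cdot\|_\infty^2$ and the identity $\sum_{\tau\in\binom{[n]}{\alpha}}\|r_t^\tau\|_2^2=\tfrac{\alpha}{n}\binom{n}{\alpha}\|r_t\|_2^2$, which leads to a product of iteration-dependent (and random, through $\xi_j$ and $s(\upsilon,\beta_j)$) contraction factors. You instead bound once and for all via $\max\ge$ average on the numerator and the trivial bound $\sum_\tau\bar A_{1(s(\tau),s(\tau))}\le\binom{n}{\alpha}\max_i\bar A_{1(i,i)}$ on the denominator, obtaining a single $\alpha$-independent geometric rate $1-\sigma_{\min}^2(A_1)/(n\max_i\bar A_{1(i,i)})$. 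Your route is slightly cruder but actually cleaner for the stated goal: the paper's product of factors in $(0,1)$ does not by itself force the product to tend to $0$ without an additional uniform lower bound on the per-step decrement, and the natural such bound (using $\xi_j\le\alpha_j$) recovers exactly your constant. Your closing remark about the finite inner loop and the resulting inexact outer iteration applies equally to both proofs.
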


\begin{proof}
 Considering \Cref{thm:split} and 
 the assumption $\beta^{\star}=x_{k+1}$, where $\beta^{\star}$ is the unique solution of the rewritten form of (\ref{Sec3.41}), i.e., $\bar{A_1}\beta=\hat{b}$, to prove the convergence of the SP-SCD method, we only need to show that the sequence $\left\{\beta_t\right\}_{t=0}^{\infty}$ generated by the inner iteration, i.e., the SCD update, starting from an initial guess $\beta_0=0$, converges to $\beta^{\star}$ in expectation.

First, from \Cref{alg:SP-SCD}, we have
 \begin{align}
\beta_{t}-\beta^{\star}
&=
\beta_{t-1}-\beta^{\star}+\frac{ \hat{b}^{\left(j_{t-1}\right)}-\bar{A_1}^{\left(j_{t-1}\right)}\beta_{t-1}}{ \bar{A}_{1\left(j_{t-1}, j_{t-1}\right)}}  e_{(j_{t-1})}
=
\beta_{t-1}-\beta^{\star}-\frac{e_{\left(j_{t-1}\right)}^T\left( \bar{A_1}\beta_{t-1}-\hat{b} \right)}{ \bar{A}_{1\left(j_{t-1}, j_{t-1}\right)}}  e_{(j_{t-1})},
\notag
\end{align}
which together with the fact $\bar{A_1}\beta^{\star}=\hat{b}$ yields
 \begin{align}
\beta_{t}-\beta^{\star}
&=
\left(I- \frac{e_{(j_{t-1})}e_{(j_{t-1})}^T\bar{A_1}}{ \bar{A}_{1\left(j_{t-1}, j_{t-1}\right)}}\right)  \left( \beta_{t-1}-\beta^{\star} \right).
\notag
\end{align}
Thus, taking the square of the energy norm on both sides, by some algebra, we get
 \begin{align}
\left\| \beta_{t}-\beta^{\star}  \right\|_{\bar{A_1}}^{2}
&=
\left( \beta_{t}-\beta^{\star} \right)^T\bar{A_1}\left( \beta_{t}-\beta^{\star} \right)
\notag
\\
&= \left( \beta_{t-1}-\beta^{\star} \right)^T\left(I- \frac{\bar{A_1} e_{(j_{t-1})} e_{(j_{t-1})}^T}{ \bar{A}_{1\left(j_{t-1}, j_{t-1}\right)}}\right)\bar{A_1}\left(I- \frac{e_{(j_{t-1})}e_{(j_{t-1})}^T\bar{A_1}}{ \bar{A}_{1\left(j_{t-1}, j_{t-1}\right)}}\right)  \left( \beta_{t-1}-\beta^{\star} \right)
\notag
\\
&= \left( \beta_{t-1}-\beta^{\star} \right)^T\left(\bar{A_1}- \frac{\bar{A_1} e_{(j_{t-1})} e_{(j_{t-1})}^T\bar{A_1}}{ \bar{A}_{1\left(j_{t-1}, j_{t-1}\right)}} \right)  \left( \beta_{t-1}-\beta^{\star} \right)
\notag
\\
&= \left\| \beta_{t-1}-\beta^{\star}  \right\|_{\bar{A_1}}^{2} -  \frac{\left( \beta_{t-1}-\beta^{\star} \right)^T\bar{A_1}^T e_{(j_{t-1})} e_{(j_{t-1})}^T\bar{A_1}\left( \beta_{t-1}-\beta^{\star} \right)}{ \bar{A}_{1\left(j_{t-1}, j_{t-1}\right)}}
\notag
\\
&= \left\| \beta_{t-1}-\beta^{\star}  \right\|_{\bar{A_1}}^{2} -  \frac{\left(e_{(j_{t-1})}^T\bar{A_1}\left( \beta_{t-1}-\beta^{\star} \right)\right)^2}{ \bar{A}_{1\left(j_{t-1}, j_{t-1}\right)}}
\notag
\\
&= \left\| \beta_{t-1}-\beta^{\star}  \right\|_{\bar{A_1}}^{2} -  \frac{ \left(\bar{A_1}^{(j_{t-1})}\beta_{t-1}-\hat{b}^{(j_{t-1})}\right)^2 }{ \bar{A}_{1\left(j_{t-1}, j_{t-1}\right)}}.
\notag
\end{align}
Now, taking expectation of both sides (with respect to $\tau_{t-1}$) conditioned on $\beta_{t-1}$, we obtain
\begin{align}
&\mathbb{E}^{t-1}_{\tau_{t-1}}\left[ \left\| \beta_{t}-\beta^{\star}  \right\|_{\bar{A_1}}^{2}  \right]
=
\left\| \beta_{t-1}-\beta^{\star}  \right\|_{\bar{A_1}}^{2} -  \mathbb{E}^{t-1}_{\tau_{t-1}}\left[\frac{ \left(\bar{A_1}^{(j_{t-1})}\beta_{t-1}-\hat{b}^{(j_{t-1})}\right)^2 }{ \bar{A}_{1\left(j_{t-1}, j_{t-1}\right)}}\right]
\notag
\\
&\quad\quad=
\left\| \beta_{t-1}-\beta^{\star}  \right\|_{\bar{A_1}}^{2} -  \sum\limits_{\tau\in\binom{[n]}{\alpha_{t-1}}} p\left(\tau, \beta_{t-1}\right )\cdot\frac{ \left(\bar{A_1}^{(j_{t-1})}\beta_{t-1}-\hat{b}^{(j_{t-1})}\right)^2  }{ \bar{A}_{1\left(j_{t-1}, j_{t-1}\right)}}
\notag
\\
&\quad\quad=
\left\| \beta_{t-1}-\beta^{\star}  \right\|_{\bar{A_1}}^{2} -  \sum\limits_{\tau\in\binom{[n]}{\alpha_{t-1}}} \frac{ \bar{A}_{1\left(s\left(\tau, \beta_{t-1}\right ), s\left(\tau, \beta_{t-1}\right )\right)} }{\sum\limits_{\upsilon\in\binom{[n]}{\alpha_{t-1}}} \bar{A}_{1\left(s\left(\upsilon, \beta_{t-1}\right ), s\left(\upsilon, \beta_{t-1}\right )\right)} } \cdot\frac{ \left\|\bar{A_1}^{\tau}\beta_{t-1}-\hat{b}^{\tau}\right\|^2_{\infty} }{ \bar{A}_{1\left(s\left(\tau, \beta_{t-1}\right ), s\left(\tau, \beta_{t-1}\right )\right)} }
\notag
\\
&\quad\quad=
\left\| \beta_{t-1}-\beta^{\star}  \right\|_{\bar{A_1}}^{2} - \frac{ 1 }{\sum\limits_{\upsilon\in\binom{[n]}{\alpha_{t-1}}} \bar{A}_{1\left(s\left(\upsilon, \beta_{t-1}\right ), s\left(\upsilon, \beta_{t-1}\right )\right)} } \sum\limits_{\tau\in\binom{[n]}{\alpha_{t-1}}}\left\| \bar{A_1}^{\tau}\beta_{t-1}-\hat{b}^{\tau}\right\|^2_{\infty},
\notag
\end{align}
which together with
\begin{align}
\xi_{j}=\frac{\sum\limits_{\tau\in\binom{[n]}{\alpha_{j}}}\left\| \bar{A_1}^{\tau}\beta_{j}-\hat{b}^{\tau}\right\|^2_{2}}{\sum\limits_{\tau\in\binom{[n]}{\alpha_{j}}}\left\| \bar{A_1}^{\tau}\beta_{j}-\hat{b}^{\tau}\right\|^2_{\infty}},\notag
\end{align}
leads to
\begin{align}
&\mathbb{E}^{t-1}_{\tau_{t-1}}\left[ \left\| \beta_{t}-\beta^{\star}  \right\|_{\bar{A_1}}^{2}  \right]\notag
\\
&
=
\left\| \beta_{t-1}-\beta^{\star}  \right\|_{\bar{A_1}}^{2} - \frac{ 1 }{\sum\limits_{\upsilon\in\binom{[n]}{\alpha_{t-1}}} \bar{A}_{1\left(s\left(\upsilon, \beta_{t-1}\right ), s\left(\upsilon, \beta_{t-1}\right )\right)} } \cdot \frac{1}{\xi_{t-1}}\cdot \sum\limits_{\tau\in\binom{[n]}{\alpha_{t-1}}}\left\| \bar{A_1}^{\tau}\beta_{t-1}-\hat{b}^{\tau}\right\|^2_{2}
\notag
\\
&=
\left\| \beta_{t-1}-\beta^{\star}  \right\|_{\bar{A_1}}^{2} - \frac{ 1 }{\sum\limits_{\upsilon\in\binom{[n]}{\alpha_{t-1}}} \bar{A}_{1\left(s\left(\upsilon, \beta_{t-1}\right ), s\left(\upsilon, \beta_{t-1}\right )\right)} } \cdot \frac{1}{\xi_{t-1}}\cdot \frac{\binom{n}{\alpha_{t-1}}\alpha_{t-1}}{n}\cdot \left\| \bar{A_1}\beta_{t-1}-\hat{b}\right\|^2_{2}.
\notag
\end{align}
Further, noting $\bar{A_1}\beta^{\star}=\hat{b}$ and $\bar{A_1}=A_1^TA_1$, we have
\begin{align}
&\mathbb{E}^{t-1}_{\tau_{t-1}}\left[ \left\| \beta_{t}-\beta^{\star}  \right\|_{\bar{A_1}}^{2}  \right]
\notag
\\
&=
\left\| \beta_{t-1}-\beta^{\star}  \right\|_{\bar{A_1}}^{2} - \frac{ 1 }{\sum\limits_{\upsilon\in\binom{[n]}{\alpha_{t-1}}} \bar{A}_{1\left(s\left(\upsilon, \beta_{t-1}\right ), s\left(\upsilon, \beta_{t-1}\right )\right)} } \cdot \frac{1}{\xi_{t-1}}\cdot \frac{\binom{n}{\alpha_{t-1}}\alpha_{t-1}}{n}\cdot \left\| A_1^TA_1\left(\beta_{t-1}-\beta^{\star}\right)\right\|^2_{2}
\notag
\\
&\leq
\left\| \beta_{t-1}-\beta^{\star}  \right\|_{\bar{A_1}}^{2}- \frac{ 1 }{\sum\limits_{\upsilon\in\binom{[n]}{\alpha_{t-1}}} \bar{A}_{1\left(s\left(\upsilon, \beta_{t-1}\right ), s\left(\upsilon, \beta_{t-1}\right )\right)} } \cdot \frac{1}{\xi_{t-1}}\cdot \frac{\binom{n}{\alpha_{t-1}}\alpha_{t-1}}{n}\cdot \sigma_{\min}^{2}(A_1) \left\| A_1\left(\beta_{t-1}-\beta^{\star}\right)  \right\|_{2}^{2}
\notag
\\
&\leq
\left( 1 - \frac{ 1 }{\sum\limits_{\upsilon\in\binom{[n]}{\alpha_{t-1}}} \bar{A}_{1\left(s\left(\upsilon, \beta_{t-1}\right ), s\left(\upsilon, \beta_{t-1}\right )\right)} } \cdot \frac{1}{\xi_{t-1}}\cdot \frac{\binom{n}{\alpha_{t-1}}\alpha_{t-1}}{n}\cdot \sigma_{\min}^{2}(A_1)\right)\left\| \beta_{t-1}-\beta^{\star}  \right\|_{\bar{A_1}}^{2}.
\notag
\end{align}
Thus, by the law of total expectation, we can obtain
\begin{align}
\mathbb{E}\left[ \left\| \beta_{t}-\beta^{\star}  \right\|_{\bar{A_1}}^{2}  \right]\leq
\prod\limits_{j=0}^{t-1}\left( 1 - \frac{ 1 }{\sum\limits_{\upsilon\in\binom{[n]}{\alpha_{j}}} \bar{A}_{1\left(s\left(\upsilon, \beta_{j}\right ), s\left(\upsilon, \beta_{j}\right )\right)} } \cdot \frac{1}{\xi_{j}}\cdot \frac{\binom{n}{\alpha_{j}}\alpha_{j}}{n}\cdot \sigma_{\min}^{2}(A_1)\right)\left\| \beta^{\star}  \right\|_{\bar{A_1}}^{2},
\notag
\end{align}
which concludes the proof.
 \end{proof}

\begin{remark}
\label{re-th:sp-scd}
Similar to the SP and SP-RK-RGS methods, 
from \Cref{SP-SCD method}, it follows that the SP-SCD method also converges `unconditionally'.
\end{remark}
\section{Experimental results}
\label{sec:experiments}

In this section, we compare the latest iterative method for the ILS problem, i.e., the USSOR method, with our proposed methods, i.e., the SP, SP-RK-RGS, and SP-SCD methods, in terms of the computing time in seconds (denoted as ``CPU") and  the number of iterations (denoted as ``IT"). Here, the  CPU and IT are arithmetical average quantities with respect to 10 repeated trials of each method. We also use CPU-inner and IT-inner to represent respectively the total inner computing time and iteration numbers of the inner iterations of the SP-RK-RGS and SP-SCD methods. Furthermore, to see the advantage of our proposed SP, SP-RK-RGS and SP-SCD methods over the USSOR method more intuitively, we also present the computing time speed-up of our methods against the USSOR method, which are defined as
\begin{align}
\texttt{speed-up-1}=\frac{\text{CPU of USSOR} }{\text{CPU of SP} },\quad
\texttt{speed-up-2}=\frac{\text{CPU of USSOR} }{\text{CPU of SP-RK-RGS} },\notag
\end{align}
and
\begin{align}
\texttt{speed-up-3}=\frac{\text{CPU of USSOR} }{\text{CPU of SP-SCD} }.\notag
\end{align}
All the computations are obtained by using MATLAB (version R2017a) on a personal computer with 3.00 GHz CPU (Intel(R) Core(TM) i7-9700), 16.0 GB memory, and Windows 10 operating system.

In addition, all the  experiments start from an initial vector $x_0=0$, and terminate once the \emph{relative residual} (RR) at $x_{k}$, defined by
$$\mathrm{RR}=\frac{\left\|A^TJ\left(Ax_k-b \right)\right\|_2^2 }{\left\|A^TJb\right\|_2^2 },$$
is less than $10^{-6}$, or the number of outer iterations exceeds 20000.

\subsection{Computational complexities}
\label{subsec:Computational complexities}
Before showing the specific experimental results, we first discuss the computational complexities of the USSOR method listed in \Cref{alg:USSOR} and our proposed methods, i.e., the SP, SP-RK-RGS and SP-SCD methods.
\begin{algorithm}
\caption{USSOR method for the ILS problem (\ref{Sec11}) \citep[][]{song2020ussor}.}
\label{alg:USSOR}
\begin{algorithmic}[1]
\STATE{ Give an initial vector $x^{0}$, and parameters $\omega$ and $\hat{\omega}$.}
\STATE{Set $\bar{b}_1=A_1^Tb_1 $, $R=A_{2}^TA_{2} $, $P=\left(A_{1}^TA_{1}\right)^{-1} $ and $\tau=\omega+\hat{\omega}-\omega\hat{\omega}$.}
\STATE{Compute $\bar{\delta}_{1}^{0}=A_{1}^{\top}\left(b_{1}-A_{1} x^{0}\right), \delta_{2}^{0}=b_{2}-A_{2} x^{0}$.}
\FOR{$k=1,2, \ldots$ until convergence,}
\STATE{$\bar{\delta}_{1}^{k+1}  =\tau A_{2}^{\top}\left[(1-\omega) \delta_{2}^{k}+\omega b_{2}\right]+\omega \tau R P\left(\bar{\delta}_{1}^{k}-\bar{b}_{1}\right)+(1-\tau) \bar{\delta}_{1}^{k}$.}
\STATE{$x^{k+1}  =(1-\tau) x^{k}+P\left[\tau \bar{b}_{1}-\omega(1-\hat{\omega}) \bar{\delta}_{1}^{k}-\hat{\omega }\bar{\delta}_{1}^{k+1}\right]$. }
\STATE{$\delta_{2}^{k+1} =(1-\tau)\left(A_{2} x^{k}+\delta_{2}^{k}\right)-A_{2} x^{k+1}+\tau b_{2}$. }
\ENDFOR
\end{algorithmic}
\end{algorithm}

In \Cref{alg:USSOR}, the steps 2 and 3 need operation counts of about $mn^2+2n^3+2pn+3-n$ and $2mn+2pn-n$, respectively, and hence give the total counts of about $mn^2+2n^3+2mn+4pn+3-2n$. Determining $\bar{\delta}_{1}^{k+1}$, $x^{k+1}$ and $\delta_{2}^{k+1}$ needs to compute step 5 to step 7, which requires operation counts of about $2qn+4n^2+3q+3n+3$, $2n^2+6n+3$, and $4qn+3q+1$, respectively, and hence gives the total counts of about $6qn+6n^2+6q+9n+7$. Then the total operation counts of the USSOR method are about $$mn^2+2n^3+2mn+4pn+3-2n+\left( 6qn+6n^2+6q+9n+7\right)\cdot T_{\text{USSOR}},$$ where $T_{\text{USSOR}}$ is the number of iterations of the USSOR method.

For the SP and SP-RK-RGS methods,  from \Cref{remark-cost: SP,remark-cost: SP-RK-RGS}, we know that they require operation counts of about $mn^2+4n^3+2mn+2n^2-2n+2n^2 \cdot T_{\text{SP}}$ and $qn^2+2mn-n+\left(2n^2+\left(2pn+6p+2\right)\cdot T_{\text{RK-RGS}} \right)\cdot T_{\text{SP-RK-RGS}},$ respectively. The differences among them and the cost of the USSOR method are also introduced in \Cref{remark-cost: SP,remark-cost: SP-RK-RGS}, respectively.

For the  SP-SCD method, since the sampling probability (\ref{Sec4.111}) 
is computationally prohibitive, we rewrite \Cref{alg:SP-SCD} as \Cref{alg:SP-SCD-version2} and apply it to the specific experiments. For simplicity for analyzing the computational complexity, we assume that $\alpha_t=\alpha_{t+1}=\alpha$ for $t=0, 1, 2, \ldots $ and the inner SCD update has the same iteration numbers $T_{\text{SCD}}$ for $k=0, 1, 2, \ldots. $ In this case, the total cost of \Cref{alg:SP-SCD-version2} is about
$$ mn^2+2mn-n+ \left(2n^2+ \left(2n+2\alpha+4 \right)\cdot T_{\text{SCD}}\right)\cdot T_{\text{SP-SCD}},$$
where $T_{\text{SP-SCD}}$ is the outer iteration numbers of the SP-SCD method. This cost is almost the same as the ones of the USSOR and SP methods. However, the SP-SCD method performs best in numerical experiments. This is because the total operation counts of various methods given above are only approximate and may be far from the accurate ones. One of the contributing factors is 
the actual iteration numbers. This implies that the above distinguishing on complexities of the four methods is quite wild and hence may only provide limited suggestions for practical applications.

\begin{algorithm}
\caption{SP-SCD method for ILS problem (\ref{Sec11}).}
\label{alg:SP-SCD-version2}
\begin{algorithmic}[1]
\STATE{Input: $A$, $J$, $b$, and initial estimate $x_0$. }
\STATE{Set $ \bar{A_1}=A_1^TA_1$, $ \bar{A_2}=A_2^TA_2$, and $\bar{b}=A^TJb$.}
\FOR{$k=0, 1, 2, \ldots $ until convergence,}
\STATE{Compute $\hat{b}=\bar{A_2}x_{k}+\bar{b}$.}
\STATE{Set $\beta_0=0$ and $r_0=\hat{b}-\bar{A_1}\beta_0$.}
\FOR{$t=0, 1, 2, \ldots $ until convergence,}
\STATE{Generate a positive integer $\alpha_t\in[n]$ at random.}
\STATE{Choose an index subset of size $\alpha_t$, $\tau_t$, uniformly at random from among $ [n]$.  }
\STATE{Set $j_{t}=\text{arg}\max\limits_{s\in\tau_t}\left|r^{(s)} \right|^2$.}
\STATE{Update $r_{t+1}=r_{t}-\frac{ r_{k}^{\left(j_{t}\right)}}{ \bar{A_1}_{\left(j_{t}, j_{t}\right)}}  \bar{A_1}_{(j_{t})}  $.}
\STATE{Update $\beta_{t+1}=\beta_{t}+\frac{ r_{k}^{\left(j_{t}\right)}}{ \bar{A_1}_{\left(j_{t}, j_{t}\right)}}  e_{(j_{t})}  $.}
\ENDFOR
\STATE{Set $x_{k+1}=\beta_{t+1}.$ }
\ENDFOR
\end{algorithmic}
\end{algorithm}

\subsection{Examples from \citet{song2020ussor}} 
\label{subsec: random data matrices}
Specifically, we set
$A_1=\texttt{rand(p,n)}$, $A_2=7*\texttt{eye(q,n)}$, 
$b_1=\texttt{rand(p,1)}$, and $b_2=\texttt{rand(q,1)}$. 
For the optimal parameters of the USSOR method, we obtain them 
according to Theorem 3.1 in \cite{song2020ussor}. Numerical results on different $p,q$ and $n$ are reported in Tables \ref{table-p-30000-q-n} and \ref{table-p-40000-q-n}.

\begin{table}[t!]
\tblcaption{ Numerical results of the methods on $p=30000$ and $q=n$.}
{%
\begin{tabular}{@{}ccccc@{}}
\tblhead{&  $m\times n$   &  $43000\times13000$   & $44000\times14000$  &$45000\times15000$  }
\multirow{6}{*}{USSOR} & $\tau $        &  1.0458      & 1.0544     &1.0645      \cr
                       & $\omega$       &  0.5000      & 0.5000     &0.5000        \cr
                       & $\hat{\omega}$ &  1.0917      & 1.1087     &1.1291      \cr
                       & IT             &  3           & 4          &4            \cr
                       & CPU            &  1974.0      & 2504.4     &2938.5      \cr

\hline
\multirow{3}{*}{SP}    & IT             &  1          & 1          &1               \cr
                       & CPU            &  559.3      &619.1       &830.2         \cr
                       & \texttt{speed-up-1}     &  3.5294     &  4.0452   & 3.5395   \cr
\hline
\multirow{5}{*}{SP-RK-RGS}&  IT-inner      &$1.5339\times 10^{5}$    &$1.6681\times 10^{5}$   &$ 2.3281\times 10^{5}$    \cr
& IT             &  1.3000       &1.3000    &1.7000      \cr

    & CPU-inner    & 283.8500  &316.6344  &452.3172   \cr
                       & CPU            & 514.6313 & 592.2625  &803.0406  \cr

                       & \texttt{speed-up-2}           &  3.8358  &  4.2285   & 3.6592   \cr
\hline
\multirow{5}{*}{SP-SCD}
&  IT-inner      &$1.1343\times 10^{4}$  &$1.3420\times 10^{4}$  &$1.3618\times 10^{4}$   \cr
& IT             &  1.1000    &1.3000   & 1.1000   \cr
& CPU-inner    &17.7969  & 22.0516 &  23.7906   \cr
& CPU            & 227.7141  &286.9359  &324.5609   \cr

 &\texttt{speed-up-3}          &  8.6688  &  8.7281   & 9.0538
\lastline
\end{tabular}
}
\label{table-p-30000-q-n}
\end{table}

\begin{table}[t!]
\tblcaption{ Numerical results of the methods on $p=40000$ and $q=n$.}
{%
\begin{tabular}{@{}ccccc@{}}
\tblhead{ &  $m\times n$   &  $53000\times13000$   & $54000\times14000$  &$55000\times15000$    }
\multirow{6}{*}{USSOR} & $\tau $       &     1.0206 &  1.0230 &    1.0258  \cr
                       & $\omega$       &  0.5000      & 0.5000     &0.5000       \cr
                       & $\hat{\omega}$ &    1.0412 &  1.0460    &  1.0516      \cr
                       & IT            &   3     &   3    &  3     \cr
                       & CPU         &    2581.8  &  2948.1 &   3392.9      \cr

\hline
\multirow{3}{*}{SP}    & IT       &1    & 1  &   1    \cr
                       & CPU      &599.1    &762.0   & 927.1    \cr
                       &\texttt{speed-up-1}     &  4.3095 &   3.8689  &  3.6597      \cr
\hline
\multirow{5}{*}{SP-RK-RGS}   &  IT-inner     &  $1.2676\times 10^{5}$    & $1.3276\times 10^{5}$    & $1.4641\times 10^{5}$    \cr
& IT        & 1.1000   & 1.1000   & 1.2000     \cr
   & CPU-inner    &      254.0328 & 271.6281  &309.8109 \cr
                       & CPU     &  546.1656  &591.3906 & 708.3609 \cr

                       & \texttt{speed-up-2}          & 4.7271   & 4.9850 &   4.7898    \cr
\hline
\multirow{5}{*}{SP-SCD}&  IT-inner      &   10120  &  9208   & 9165   \cr
& IT          &   1.2000   & 1.1000  &  1.0000  \cr
& CPU-inner   &      15.7312  & 15.1938 &  15.9953  \cr
& CPU           &     272.8125 & 310.7219 & 359.2063  \cr

 &\texttt{speed-up-3}         &   9.4636   & 9.4879   & 9.4455
\lastline
\end{tabular}
}
\label{table-p-40000-q-n}
\end{table}

From these two tables, we can find that our proposed three methods outperform the USSOR method in terms of the iteration numbers and computing time, 
and the computing time speed-up is at least 3.5294 (see \texttt{speed-up-1} in Table \ref{table-p-30000-q-n} for the $43000\times 13000$ matrix). 
Meanwhile, the SP-RK-RGS and SP-SCD methods are more efficient than the SP method in computing time, and the efficiency of the SP-SCD method is the most remarkable. 
This is probably mainly because the inner iteration of the SP-SCD method needs fewer iteration numbers and less running time compared with the one of the SP-RK-RGS method.

\subsection{Examples from 
Minkowski spaces}
\label{subsec:  Minkowski spaces}

In this case, $p=m-1$ and $q=1$. We consider the same setting as in 
\Cref{subsec: random data matrices}. That is, 
$A_1=\texttt{rand(p,n)}$, $A_2=7*\texttt{eye(1,n)}$, 
$b_1=\texttt{rand(p,1)}$, and $b_2=\texttt{rand(1,1)}$. The optimal parameters of the USSOR method are also computed  
according to Theorem 3.1 in \cite{song2020ussor}. We report the numerical results on different $p$ and $n$ in Tables \ref{table-p-50000-q-1} and \ref{table-p-60000-q-1}, which show the similar results obtained in \Cref{subsec: random data matrices}. That is, the SP, SP-RK-RGS and SP-SCD methods outperform the USSOR method in both iteration numbers and CPU time, and the SP-RK-RGS and SP-SCD methods have better performance in computing time.

\begin{table}[t!]
\tblcaption{ Numerical results of the methods in Minkowski spaces with $p=50000$ and $q=1$.}
{%
\begin{tabular}{@{}ccccc@{}}
\tblhead{  &  $m\times n$   &  $50001\times13000$   & $50001\times14000$  &$50001\times15000$     }
\multirow{6}{*}{USSOR} & $\tau $        & 1.0040    & 1.0041   & 1.0043   \cr
                       & $\omega$       &  0.5000      & 0.5000     &0.5000        \cr
                       & $\hat{\omega}$ &     1.0080   & 1.0083   & 1.0085    \cr
                       & IT            &      2   &  2  &   2  \cr
                       & CPU           &     680.7    & 864.4  &  1048.8    \cr
\hline
\multirow{3}{*}{SP}    & IT         &     1   &  1  &   1  \cr
                       & CPU       &  566.9453   &731.0016   & 881.0750  \cr
                       & \texttt{speed-up-1}          &     1.2006 &   1.1825 &   1.1904   \cr
\hline
\multirow{5}{*}{SP-RK-RGS} &  IT-inner    & $ 1.0920\times 10^{5}$    & $1.1956\times 10^{5}$   & $ 1.1491\times 10^{5}$        \cr
& IT          &   1     &   1    &   1     \cr
  & CPU-inner   &        237.4219 & 265.5109 & 264.4844      \cr
                       & CPU         &    466.9094&  530.9078&  560.6984      \cr

                       &  \texttt{speed-up-2}          &    1.4579 &   1.6282 &   1.8705    \cr
\hline
\multirow{5}{*}{SP-SCD}  &  IT-inner    &     6749.2   & 7315.2  &  7398.8        \cr
& IT         &    1    &  1     &   1    \cr
 & CPU-inner   &   11.7109  & 15.6125  & 16.3172    \cr
& CPU          &   198.2734 & 231.1078 & 265.1656     \cr

 &\texttt{speed-up-3}         &     3.4331  &  3.7402 &   3.9553
\lastline
\end{tabular}
}
 \label{table-p-50000-q-1}
\end{table}

\begin{table}[t!]
\tblcaption{ Numerical results of the methods in Minkowski spaces with $p=60000$ and $q=1$.}
{%
\begin{tabular}{@{}ccccc@{}}
\tblhead{  &  $m\times n$   &  $60001\times13000$   & $60001\times14000$  &$60001\times15000$      }
\multirow{6}{*}{USSOR} & $\tau $        &     1.0031  &  1.0032 &   1.0033    \cr
                       & $\omega$       &  0.5000      & 0.5000     &0.5000      \cr
                       & $\hat{\omega}$ &     1.0063 &   1.0064  &  1.0066 \cr
                       & IT            &   2    & 2    & 2      \cr
                       & CPU           &   739.6   & 901.1  &  1152.7    \cr

\hline
\multirow{3}{*}{SP}    & IT         &     1   &   1    &     1   \cr
                       & CPU      &      633.1  &  768.9  &  1019.6       \cr
                       &\texttt{speed-up-1}         &     1.1681  &  1.1719   & 1.1305     \cr
\hline
\multirow{5}{*}{SP-RK-RGS} &  IT-inner   & $1.0131\times 10^{5}$    & $ 1.0289\times 10^{5}$    & $ 1.1229\times 10^{5}$   \cr
& IT        &    1   &  1   &  1     \cr
   & CPU-inner   &     236.9672 & 245.5719 & 279.6359    \cr
                       & CPU         &   511.2844  &567.4719&  687.4875   \cr

                       & \texttt{speed-up-2}         &    1.4465   & 1.5879 &   1.6766     \cr
\hline
\multirow{5}{*}{SP-SCD} &  IT-inner    &   5584.7  &  6046.7   & 6026.5     \cr
& IT       &    1   &  1    & 1\cr
  & CPU-inner   &      13.1734  & 14.0109   &10.6609 \cr
& CPU          &       238.0406 &  273.4203  &310.2516     \cr

 &\texttt{speed-up-3}        &     3.1068   & 3.2956  &  3.7153
\lastline
\end{tabular}
}
\label{table-p-60000-q-1}
\end{table}

\section{Concluding remarks}
\label{sec:conclusions}

In this paper, we propose three `unconditionally' convergent iterative methods, i.e., the SP, SP-RK-RGS, and SP-SCD methods, to solve the ILS problem (\ref{Sec11}).
Numerical results show that they all have quite decent performance, and the two randomized methods are particularly efficient in computing time. 
A future work is to consider the splitting-based randomized iterative methods for the large-scale ILS problem with equality constraints \citep[see, e.g.,][]{Bojanczyk2003, Liu2010,Mastronardi2014,mastronardi2015structurally}.

%
%

\section*{Funding}
The National Natural Science Foundation of China (No. 11671060); The Natural Science Foundation of Chongqing, China (No. cstc2019jcyj-msxmX0267).

\bibliographystyle{IMANUM-BIB}
\bibliography{IMANUM-refs}

\end{document}